\theoremstyle{plain}
\newtheorem{thm}{\sc {\bf Theorem}}[section]
\newtheorem{lem}[thm]{\sc {\bf Lemma}}
\newtheorem{coro}[thm]{\sc {\bf Corollary}}
\newtheorem{prop}[thm]{\sc {\bf Proposition}}
\theoremstyle{definition}
\newtheorem{rem}[thm]{\sc {\bf Remark}}
\newtheorem{defi}[thm]{\sc {\bf Definition}}
\newcommand{\lgg}{{\left\langle\right.}}
\newcommand{\rg}{{\left.\right\rangle}}
\newcommand{\ve}{{\varepsilon}}
\newcommand{\pain}{\partial_{\infty}}
\newcommand{\ov}{\overline}
\newcommand{\R}{{\mathbb R}}
\newcommand{\hyp}{{\mathbb H}}
\newcommand{\hi}{{\mathbb H}}
\newcommand{\h}{{\mathbb H}}
\font\sc=cmcsc10
\begin{document}

\title { On the structure of hypersurfaces in $\hyp^n\times \R$ with finite strong total curvature }

\author{Maria Fernanda Elbert and Barbara Nelli}
\maketitle

 \let\thefootnote\relax\footnote{ Keywords: higher order mean curvature, hyperbolic translation, finite strong total curvature.\\
\indent \indent 2000 Mathematical Subject Classification: 53C42, 53A10.\\
 \indent \indent The authors were partially supported by INdAM-GNSAGA, PRIN-2015A35N9B-011.}

\begin{abstract}

We prove that if  $X:M^n\to\h^n\times\R$, $n\geq 3$, is a an orientable, complete  immersion with    finite strong total curvature, then $X$
is proper and  $M$
is diffeomorphic to a compact manifold $\bar M$ minus a finite
number of points $q_1, \dots q_k$.  Adding some extra hypothesis, including  $H_r=0,$ where $H_r$ is a higher order mean curvature, we obtain more information about the geometry of a neighbourhood of each puncture.

The reader will also find in this paper a classification result for the hypersurfaces of $\hyp^n\times \R$ which satisfy $H_r=0$ and are invariant  by hyperbolic translations and a maximum principle in a half space for these hypersurfaces.

\end{abstract}

\section*{Introduction}

 Different notions of  total curvature  of a manifold  M have been used in the literature. Classically a surface has   {\it finite total curvature} if the norm of the gaussian curvature is integrable  on $M$. On the other hand, a hypersurface $M$ of a Riemannian manifold has  {\it finite extrinsic total curvature} if  the norm of the second fundamental form of  $M$  belongs to  $L^n$. Here, by the norm of the second fundamental form we mean the euclidean norm of the vector formed by the principal curvatures of the hypersurface.
 Notice that, in the case of minimal surfaces in  $\R^3,$ the two notions coincide.  
 The geometry of minimal surfaces with finite total curvature have been widely studied (see \cite{MePe} for a survey). A classical  result is  due to Huber and Osserman  \cite{Hu, Oss2}:

\

{\em Let  $M$ be a complete oriented, immersed minimal surface in $\R^3$ with finite total curvature. Then $M$  is conformally equivalent to 
a compact Riemann surface $M$  with a finite number of points removed (called the ends of M). Moreover, the Gauss map extends  meromorphically to the punctures.}

\

The extrinsic total curvature was used,  for example, by M. Anderson \cite{An} in order to generalise the previous result  to  minimal submanifolds  of  the Euclidean  space of higher dimension and by  B. White \cite{Wh}, who dealt  with surfaces of the Euclidean space satisfying  properties different from minimality.

\

Our aim is to somehow generalize  Osserman´s result to hypersurfaces of $\hyp^n\times\R,$ $n>2,$ where 
$\hyp^n$ is the hyperbolic space  of dimension n.  The case $n=2$ has already been addressed in \cite{HaRo},  \cite{HNST}, where the authors prove:

\

(\cite[Theorem 3.1 (c)]{HaRo},  \cite{HNST}) {\em Let $M$ be a complete minimal immersion in $\hyp^2\times\R$ with finite  total curvature. Then $M$  is proper, it is conformally equivalent to 
a compact Riemann surface $M$  with a finite number of points removed (called the ends of M). Moreover the third coordinate of the unit normal vector $n_3$ converges to zero  uniformly at
each puncture. Finally  the asymptotic boundary of each one of its ends can be identified with a special kind of closed polygonal curve in $\partial_\infty (\hyp^2\times\R$).}

\

 In   the case $n>2$, we  first consider  a hypersurface with no pointwise assumption on the curvature. Inspired by the ideas of \cite{DE}, we change the hypothesis on finite extrinsic total curvature by that of finite strong total curvature, i.e., we ask that the norm of the second fundamental form of the hypersurface belongs to a special weighted Sobolev space (see  Section \ref{tot-curv-section} for details). Then, we get the following result (see Theorem \ref{strong-normal}).
 
 \

{\em Let $X:M\to\h^n\times\R$, $n\geq 3$, be an orientable complete hypersurface    finite strong total curvature.  Then:

\begin{itemize}\itemsep=-1pt
\item[{\rm (i)}] The immersion $X$ is proper.

 \item[{\rm (ii)}] $M$
is diffeomorphic to a compact manifold $\ov M$ minus a finite
number of points $q_1, \dots q_k$.

\end{itemize}
}

\

This result  partially generalizes \cite[Theorem 1.1]{DE}. Adding some extra hypotheses  on our hypersurface of $\hyp^n\times\R$, including $H_r=0$, we obtain the geometric behaviour of a neighbourhood of a  puncture. Recall that  $H_r$, $r\in\{1,\dots,n\},$  is the mean curvature of order $r$ of an $n$-hypersurface (see Section 1 for a precise definition). We prove the following.

\

{\em Let $X:M\to\h^n\times\R$, $n\geq 3$, be an orientable complete hypersurface    finite strong total curvature and assume that $H_r=0.$ Let  $E$  be a punctured neighbourhood of one of the $q_i$´s and $N=(N_1,\dots,N_{n+1})$ be  
a  unit normal vector field on $E.$ Let $\Pi_1,\dots\Pi_k$  be an  admissible collection of hyperplanes of  $\hyp^n$ and $P_i$, $i=1,\dots,k,$ the corresponding vertical hyperplanes, such that $\partial E\subset \overline{P(\Pi_1,\dots,\Pi_k)}$. Suppose that
$\partial_{\infty}E\cap (\partial_{\infty}\hyp^n\times\R)\subset \partial_{\infty }(P_1\cup\dots\cup P_k)$.
  Then:

\begin{itemize}
 \item[{\rm (iii)}] $E$ is asymptotically close  to $P_1\cup\dots\cup P_k.$

 \item[{\rm (iv)}]   For any  sequence  of points $\{p_m\}\subset E$  converging  to a point in $\partial_{\infty}E,$ 
 the sequence $\{N_{n+1}(p_m)\}$ converges uniformly to zero.
\end{itemize}
} 

\

For the definition of  {\em admissible collection} see Definition \ref{admissible}.


\

Notice that when working with $n>2$, one looses the technical support of the complex analysis and with $r>1$, one weakens the technical support given by the theory of quasi-linear PDE. Then, it seems somehow reasonable to require a stronger hypothesis on the curvature in our context.

\


 In order to prove (iii) and (iv), we use as barriers a family of  hypersurfaces with $H_r=0$ which are invariant by hyperbolic translations, that we are able to construct  (Theorem \ref{classification-theorem}).  
 As a by product of our construction,  we prove Theorem \ref{iperplano}, which is a maximum principle at infinity  for properly immersed hypersurfaces with $H_r=0$. To the best of our knowledge, this is the first  maximum principle in a half space for hypersurfaces with $H_r=0.$  For this  part of the article, we were  inspired   by the works \cite{BS1}, \cite{N-SE-T} and \cite{ST2}.
 \


The paper is organized as follows.  After fixing notations in Section \ref{notation}, in Section \ref{invariant-hypersurfaces-section} we describe the family of hypersurfaces that are invariant by hyperbolic translations. In Section \ref{asymptotic-section}, we analyse the influence of the asymptotic boundary  of  hypersurfaces with $H_r=0,$ on their shape at finite points. Hypersurfaces   with finite strong total curvature are studied in Section \ref{tot-curv-section}, 
with no assumption on $H_r.$ Finally in Section \ref{main-result} we prove your main results Theorems \ref{strong-normal},  \ref{strong-normal-minimal}.

\section{Notations}
\label{notation}

  Let $ M^n$ be an orientable Riemannian $n$-manifold and  let  ${\hyp}^n$ be  hyperbolic space (the simply connected Riemannian manifold with constant sectional curvature equal to -1). Let   $X: M^n\to \hyp^n\times\R$  be an isometric immersion. The image  $X(M)$ is a hypersurface of ${\hyp}^n\times\R$ and we shall identify $X(M)$ with $M$ throughout the paper.

   For each $p\in M$, let $A:T_p M\to T_p M$ be the shape operator of $M$ and  
 $\kappa_1,...,\kappa_ n$ be its eigenvalues corresponding to the eigenvectors $e_1,\ldots,e_n$. The {\em higher order mean curvature}  of $M$ of order $r$ is defined as 
 
$$H_r(p)=\frac{1}{\binom nr}\sum_{i_1<...<i_r}\kappa_{i_1}...\kappa_{i_r},$$

i.e.,  the normalized $r^{\rm th}$ symmetric function of $\kappa_1,...,\kappa_ n.$  When $H_{r}=0$, the immersion is called  $(r-1)$-minimal. Thus, the classical minimal immersions would be the $0$-minimal ones.

  We consider the ball model for the hyperbolic space
 $$
\hyp^n=\{x=(x_1,\ldots,x_n)\in\R^n \ \vert \ x_1^2+\ldots+x_n^2<1\}
$$
endowed with the metric
$$
g_\hyp:=\frac{dx_1^2+\ldots+dx_{n}^2}{\left ( \frac{1-|x|^2}{2}\right )^2}=\frac{dx_1^2+\ldots+dx_{n}^2}{F^2},
$$

 where $|x|$ is the euclidean norm of $x$.
As in \cite{ST}, we define the {\em asymptotic boundary} of $\hyp^n\times\R$ as
$$
\partial_\infty(\hyp^n\times\R)=(\partial_\infty\hyp^n\times\R)\cup (\hyp^n\times\{-\infty,\infty\})\cup(\partial_\infty\hyp^n\times\{-\infty,\infty\}).
$$

\

  Let $\Pi$ be a  totally geodesic  hyperplane in $\hyp^n.$ The asymptotic boundary  of $\Pi$ splits $\partial_{\infty}\hyp^n$ into two connected components. Each component can be identified with a spherical cap of the  $(n-1)$-dimensional unit sphere.  We set $\partial_\infty \hyp^n=S_-^{n-1}\cup S_+^{n-1}$, where $S_-^{n-1}$ and $S_+^{n-1}$ are the closure of the two spherical caps determined by $\Pi$ .

Let $\Omega\subset \hyp^n\times\R$ be a nonempty subset. We say that a point $p_\infty\in \partial_\infty(\hyp^n\times\R)$ is {\it an
asymptotic point of} $\Omega$ if there is a sequence $\{p_n\}$ of points of $\Omega$ converging to $p_\infty$. The set of asymptotic points of $\Omega$,  called the {\em asymptotic boundary} of $\Omega$, is denoted by $\partial_\infty\Omega$.

\

 In what follows, we often identify the slice $\hyp^n\times\{0\}$ with $\h^n.$
 By {\em $vertical$ hyperplane} we mean  a complete totally geodesic
hypersurface 
$\Pi\times \R$, where $\Pi$ is any totally geodesic hyperplane
of $\hyp^n.$ We call a
{\em vertical  halfspace} any component of 
$(\hyp^n \times \R)\setminus P$, where $P$ is a vertical hyperplane. 

\

For a fixed totally geodesic hyperplane $\Pi$
of $\hyp^n\times\{0\},$ let $L^+_{\rho}$ and $L^-_{\rho}$ be the  equidistant hypersurfaces to $\Pi$, at distance $\rho$, in the
slice $\hyp^n\times \{0\}$.   
Denote by $Z_{\rho}^+$ the  closure of the non mean convex side of the cylinder
over the hypersurface $L_{\rho}^+$ in $\hyp^n \times \R$. Analogously, we define $Z_{\rho}^-.$  We will call the set   $C_{\rho}=\hyp^n\times\R\setminus Z_{\rho}^+\cup Z_{\rho}^-$    
  {\it $\rho$-cylinder associated to} $\Pi$ (see Figure \ref{rho-cylinder}).

\begin{figure}[!h]
\centerline{
\includegraphics[scale=0.4]{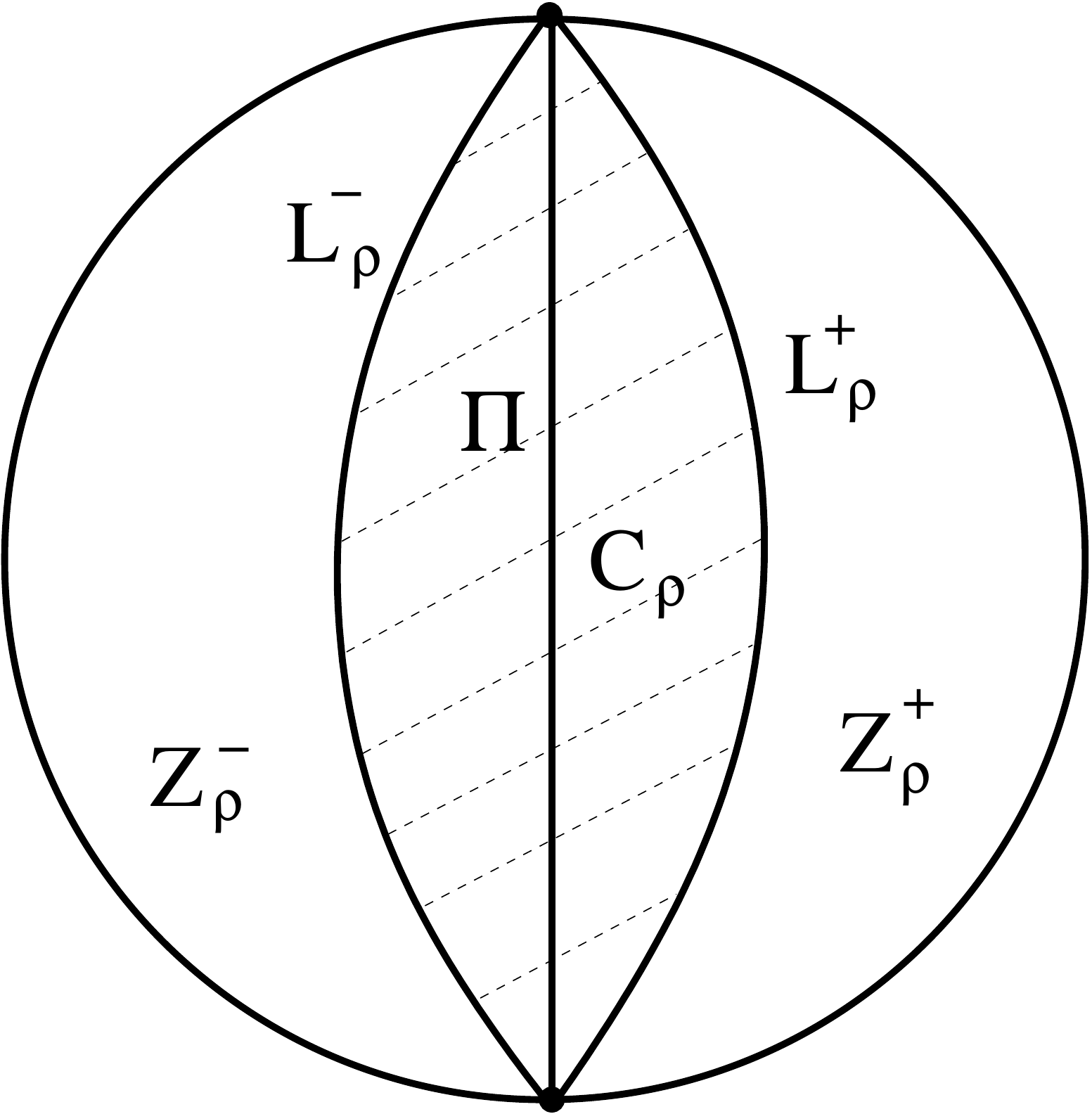}
\label{rho-cylinder} 
}

\caption{$\rho$-cylinder $C_{\rho}$}
\end{figure}

\section{Hypersurfaces  with $H_r=0$ invariant by  hyperbolic  translations }
 \label{invariant-hypersurfaces-section}

 \

We describe a family of hypersurfaces in $\hyp^n\times\R$ with $H_r=0$ which are invariant by a special family of isometries of $\hyp^n\times\R.$

   Let $\gamma$ be a complete geodesic through the origin $\sigma$   of the hyperbolic space $\hyp^n$, parametrized
by the signed distance $\rho$ to  $\sigma$. Let $\Pi$ be the hyperbolic hyperplane orthogonal to $\gamma$ at  
$\sigma$.   For each geodesic  $\beta$ in $\Pi$, passing through $\sigma$,  we consider the hyperbolic translation  along  $\beta$ in  $\hyp^n.$ We notice that the image of any point of $\gamma$ under  the  hyperbolic translations along all geodesics of $\Pi$  passing through $\sigma$ is  an equidistant hypersurface   to  $\Pi$  in $\hyp^n\times\{0\}.$ 
We extend the hyperbolic translation  along  $\beta$ {\em slice-wise} to an isometry of $\hyp^n\times\R$. By  abuse of notation,  this isometry of $\hyp^n\times\R$ will also be called  {\em hyperbolic translation along} $\beta.$

 We show the existence of  a family of hypersurfaces  of  $\hyp^n\times \R$ with $H_r=0$ which are invariant  by hyperbolic translations   along all geodesics of $\Pi$ passing trough $\sigma$.   Moreover we give a complete geometric description of the family. The case  of minimal  hypersurfaces, that is $r=1$, is treated  in \cite {ST2} and 
 \cite{BS1}.

A generating curve parametrized by $ (\tanh(\rho/2),\lambda (\rho))$	in the vertical 2-plane, $\gamma\times\R,$ gives rise, under the previous isometry, to a translationally invariant hypersurface $M$ in $\hyp^n\times\R$  whose intersection with  $\hyp^n \times\{\lambda(\rho)\}$  
 is the equidistant hypersurface to $\Pi\times\{\lambda(\rho)\}$, at distance $\rho.$

The principal directions of the hypersurface $M$ are the tangent vectors to the generating  curve and   to the  equidistant hypersurface. The corresponding principal curvatures are the following (see \cite{BS1}):

 \

\begin{equation}
\label{princ-curv}
\kappa_1=\ddot\lambda(\rho)(1+\dot\lambda(\rho)^2)^{-\frac{3}{2}}, \ \ 
\kappa_2=\dots= \kappa_n=\dot\lambda(\rho)(1+\dot\lambda(\rho)^2)^{-\frac{1}{2}}\tanh(\rho)
\end{equation}

where $(\ \dot{}\ )$ means the derivative with respect to $\rho.$ 
It follows that 

\begin{equation}
\label{Hr}
nH_r\frac{\cosh^{n-1}(\rho)}{\sinh^{r-1}(\rho)}=\frac{\partial }{\partial\rho}\left[\cosh^{n-r}(\rho)\left(\frac{\dot{\lambda}^2}{1+\dot{\lambda}^2}\right )^{\frac{r}{2}}\right].
\end{equation}

We prove the following theorem.

\

\begin{thm}\label{classification-theorem} Let  $\Pi$ be a totally geodesic hyperplane of $\hyp^n\times\{0\}$  passing through $\sigma$ and let $r\leq n$. Then there exists a one parameter family $\{ {\mathcal M}^r_d,\, d>0\}$ of
complete properly embedded  hypersurfaces in $\hyp^n \times \R$, with $H_r=0$, 
invariant under hyperbolic translations  along all the geodesics of $\Pi$ passing through $(\sigma,0).$ For $r=n$, the parameter $d$ assumes also the value $d=0$. The families are described below. 
\begin{enumerate}
\item [{\rm (a)}] $r=n$:\\
\begin{itemize}
\item ${\mathcal M}^r_0$ is a slice.

\item  When $d>0$, ${\mathcal M}^r_d$  is, up to vertical translation,  a complete graph, symmetric with respect to $\Pi$,  whose asymptotic boundary is composed by $((\Pi\cap\partial_\infty\hyp^n)\times\R)\cup(\partial S_-^{n-1}\times\{-\infty\})\cup (\partial S_+^{n-1}\times\{+\infty\})$. Here, $\partial S_{\pm}^{n-1}$ are the hemispheres determined by $\Pi$.
\end{itemize}

\item [{\rm (b)}] $r<n$:\\

\begin{itemize}

\item When $d>1,$   ${\mathcal M}^r_d$ consists of the union of two  vertical
hypersurfaces of finite height, symmetric with respect to $\hyp^n\times \{0\}$, contained in $Z_{\rho_d}^+$, where $\rho_d=\cosh^{-1}(d^{r/n-r})$.  

The asymptotic boundary of ${\mathcal M}^r_d$  is topologically an
$(n-1)$-sphere which is homologically trivial in $\partial_{\infty}
\hyp^n \times \R$. More precisely, if we set $d=\cosh^{\frac{n-r}{r}} a$, we have that
\begin{equation*} 
h_r(d)=\cosh(a)\int_1^{\infty}\frac{dv}{(v^{2q}-1)^{\frac{1}{2}}(\cosh^2(a)v^2-1)^{\frac{1}{2}}} ,
\end{equation*}
 
 is finite and 
the asymptotic boundary of ${\mathcal M}^r_d$ consists
of the union of two copies of an hemisphere $S_+^{n-1}\times
\{0\}$  of $\partial_\infty \hyp^n \times \{0\}$ in parallel slices
$t=\pm h_r(d)$, glued with the finite cylinder 
  $\partial S_+^{n-1}\times [-h_r(d),h_r(d)].$

 The
vertical height of ${\mathcal M}^r_d$ is then $2h_r(d)$. The height of the family
${\mathcal M}^r_d$ is a decreasing function  of $d$ and varies from infinity
(when  $d\longrightarrow 1$)  to  $\frac{\pi r }{(n-r)}$ (when  $d\longrightarrow \infty$). 
\item If $d=1$, then, up to reflection over a slice, ${\mathcal M}^r_1$ consists of a complete  $($non-entire$)$
   vertical graph
over  a halfspace in $\hyp^n\times \{0\}$, bounded by the totally
geodesic hyperplane $\Pi$. It takes infinite  boundary value  on
$\Pi$ and constant value data $c$ on the asymptotic boundary of the halfspace. The asymptotic
boundary of ${\mathcal M}^r_1$ is composed by $\partial_{\infty}\Pi\cap\{\hyp^n\times\{\infty\}\}$, by a  hemisphere  $S_+^{n-1}\times\{c\}$  of
$\partial_\infty \hyp^n\times \{c\}$ and by the a half vertical cylinder
over   $\partial(S_+^{n-1}\times\{c\})$.

    \item If $d<1$, then ${\mathcal M}^r_d$ is an entire vertical graph with finite
vertical height. Its asymptotic boundary consists of a
homologically non-trivial $(n-1)$-sphere in $\partial_\infty \hyp^n
\times \R.$
\end{itemize}

\end{enumerate}

\end{thm}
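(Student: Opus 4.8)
The plan is to reduce the classification to a careful ODE analysis of equation \eqref{Hr}, treating the two ranges $r=n$ and $r<n$ separately, and then to translate the analytic behaviour of the generating function $\lambda(\rho)$ into the stated geometric description of each hypersurface and its asymptotic boundary.

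\medskip

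\textbf{Step 1: Integration of the structure equation.} First I would integrate \eqref{Hr} once. Setting $H_r=0$ gives
\[
\cosh^{n-r}(\rho)\left(\frac{\dot\lambda^2}{1+\dot\lambda^2}\right)^{\frac r2}=\text{const},
\]
so, writing the constant in a convenient normalization (this is where the parameter $d$ enters, via the relation $d=\cosh^{\frac{n-r}{r}}a$ stated in the theorem), one solves algebraically for $\dot\lambda^2$:
\[
\dot\lambda(\rho)^2=\frac{1}{d^{2}\cosh^{\frac{2(n-r)}{r}}(\rho)-1}\quad\text{(up to the chosen normalization)}.
\]
The qualitative picture now depends entirely on the sign of the exponent $n-r$ and on the size of $d$ relative to $1$: for $r<n$ the denominator is increasing in $\rho$, and the crucial issue is whether $d^{2}\cosh^{\frac{2(n-r)}{r}}(\rho)-1$ vanishes at some finite $\rho=\rho_d>0$ (which happens exactly when $d>1$, giving $\rho_d=\cosh^{-1}(d^{r/(n-r)})$ and a vertical tangent to the generating curve, i.e. a vertical piece of $\mathcal M^r_d$), stays positive for all $\rho\ge 0$ with $\dot\lambda\to 0$ (when $d<1$, giving an entire graph), or the borderline case $d=1$ where $\dot\lambda(0)=\infty$ but $\dot\lambda\to 0$ at infinity (the non-entire graph over a halfspace). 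For $r=n$ the exponent is $0$, so $\dot\lambda^2=\frac{1}{d^2-1}$ forces $d>1$ for a nonconstant solution and gives a linear $\lambda$... which must be corrected: in the $r=n$ case the correct reading of \eqref{Hr} has the $\rho$-dependence sitting in $\cosh^{n-1}(\rho)/\sinh^{n-1}(\rho)$ on the left, so one re-derives the first integral keeping that factor, obtaining a genuinely nonlinear $\lambda$ whose graph has the asymptotic boundary described in (a). I would do this $r=n$ computation carefully and separately.

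\medskip

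\textbf{Step 2: Height, completeness, embeddedness.} With $\dot\lambda$ in closed form, the vertical height is $2h_r(d)=2\int_0^{\cdot}\dot\lambda\,d\rho$; after the substitution $v=\cosh^{(n-r)/r}(\rho)/\cosh^{(n-r)/r}(\rho_d)$ (or the analogous one) this becomes exactly the integral $h_r(d)=\cosh(a)\int_1^\infty \frac{dv}{(v^{2q}-1)^{1/2}(\cosh^2(a)v^2-1)^{1/2}}$ written in the statement, with $q=(n-r)/r$; I would check its convergence at both endpoints ($v\to 1$: the two square-root factors each contribute a $\frac12$-power singularity, integrable; $v\to\infty$: decay like $v^{-1-q}$), and then show monotonicity in $d$ and compute the limits $\infty$ as $d\to 1^+$ and $\frac{\pi r}{n-r}$ as $d\to\infty$ by dominated/monotone convergence after rescaling. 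Properness and completeness of $\mathcal M^r_d$ follow because the generating curve is a properly embedded curve in the $(\rho,\lambda)$-plane (it is a graph over $\rho$, or over $\lambda$ near a vertical point, with no accumulation), and embeddedness of the full hypersurface follows since the slice-wise hyperbolic translations act freely on the complement of $\gamma$ and the generating curve meets $\gamma$ transversally (or, at $d=0$ in the $r=n$ slice case, trivially).

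\medskip

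\textbf{Step 3: Asymptotic boundary.} Finally I would read off $\partial_\infty\mathcal M^r_d$. In each slice $\hyp^n\times\{\lambda(\rho)\}$ the hypersurface is the equidistant to $\Pi$ at distance $\rho$, whose asymptotic boundary in $\partial_\infty\hyp^n$ is a fixed hemisphere $S^{n-1}_+$ (independent of $\rho$, equal on both sides of $\Pi$) as recalled in Section \ref{notation}; so as $\rho\to\infty$ one picks up a copy of $S^{n-1}_\pm$ sitting at the limiting heights $t=\pm h_r(d)$ (finite when $r<n$, $d>1$), glued along $\partial S^{n-1}_+\times[-h_r(d),h_r(d)]$ which comes from the fact that over the boundary sphere $\partial\Pi$ the height ranges over the full interval. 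For $d=1$ the limiting height is $\infty$ on the $\Pi$-side and finite $=c$ on the far side, producing the mixed asymptotic boundary in (b); for $d<1$ the graph is entire with finite height, so the asymptotic boundary is the homologically nontrivial $(n-1)$-sphere $\partial S^{n-1}_+\times\{-h\}\cup(\hyp^n\times\{-h\})$-type set described; and for $r=n$, $d>0$ the graph is entire but of \emph{infinite} height with $\lambda\to\pm\infty$ as $\rho\to\pm$(the vertical-plane direction), which yields the asymptotic boundary $((\Pi\cap\partial_\infty\hyp^n)\times\R)\cup(\partial S^{n-1}_-\times\{-\infty\})\cup(\partial S^{n-1}_+\times\{+\infty\})$ of part (a).

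\medskip

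\textbf{Main obstacle.} The routine part is the first integration; the genuinely delicate part is Step 2 — proving convergence, strict monotonicity, and the exact limiting values ($\infty$ and $\frac{\pi r}{n-r}$) of the height integral $h_r(d)$, since this requires a clean change of variables that simultaneously tames both square-root singularities and behaves well under the $d\to 1^+$ and $d\to\infty$ limits. The correct bookkeeping of the $r=n$ case, where the exponent $n-r$ degenerates and \eqref{Hr} must be reorganized so that the $\sinh/\cosh$ factor is not dropped, is the other place where care is needed.
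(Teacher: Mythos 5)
Your overall strategy is exactly the paper's: integrate \eqref{Hr} once to get the first integral $\cosh^{n-r}(\rho)\bigl(\dot\lambda^2/(1+\dot\lambda^2)\bigr)^{r/2}=d^r$, solve for $\dot\lambda^2$, split into the cases $d>1$, $d=1$, $d<1$ according to whether the denominator vanishes at a positive $\rho_d$, at $\rho=0$, or never, and then obtain the height via the substitution $v=\cosh(\xi)/\cosh(a)$ with $d=\cosh^{q}(a)$, $q=(n-r)/r$, proving convergence, the monotonicity $dh_r/da<0$, and the limits at $a\to 0$ and $a\to\infty$. Your Step 2 and Step 3 for $r<n$ match the paper's proof (modulo your $d\mapsto 1/d$ normalization, which you flag, and a factor of $2$ between $h_r$ and the height $2h_r$ in the limiting value).

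There is, however, one concrete error: your treatment of the case $r=n$. You claim the first integral "must be corrected" by retaining the factor $\cosh^{n-1}(\rho)/\sinh^{r-1}(\rho)$, "obtaining a genuinely nonlinear $\lambda$". This is wrong: that factor multiplies $H_r$, which is identically zero, so the left-hand side of \eqref{Hr} vanishes regardless of it. For $r=n$ the first integral reads $\bigl(\dot\lambda^2/(1+\dot\lambda^2)\bigr)^{n/2}=\mathrm{const}$, hence $\dot\lambda\equiv d\ge 0$ and $\lambda(\rho)=d\rho+c$ is \emph{linear} in the signed distance $\rho$; the asymptotic boundary in part (a) comes not from nonlinearity of $\lambda$ but from passing to the compactified coordinate $x=\tanh(\rho/2)$, in which $\lambda\to\pm\infty$ as $x\to\pm 1$ and, over each ideal point of $\partial_\infty\Pi$, the heights $d\rho+c$ sweep out all of $\R$, producing $(\partial_\infty\Pi)\times\R$ in the asymptotic boundary. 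If you actually carried out your proposed "correction" you would be solving a different ODE and would not recover the family in (a). A smaller point: at $v=1$ with $a>0$ only the factor $(v^{2q}-1)^{1/2}$ is singular (the other factor tends to $\sinh a\neq 0$), so the integrand behaves like $(v-1)^{-1/2}$, not like the product of two $\tfrac12$-power singularities (which would be non-integrable); it is precisely the degeneration $\sinh a\to 0$ as $d\to 1^+$ that makes both factors singular simultaneously and forces $h_r(d)\to\infty$, which the paper makes quantitative via the lower bound $h_r(d)\geq A\int_1^2[(v-1)(\cosh(a)v-1)]^{-1/2}\,dv$.
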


\begin{proof} 

In each case, we determine the profile curve. The corresponding hypersurface is given by the orbits of the points of the profile curve by the hyperbolic translations along all the geodesics of $\Pi$ passing through the origin of $\hyp^n\times\{0\}.$ The properties of the hypersurfaces will be clear from our description of the profile curve.

Let us first prove (a). By taking $H_r=0$ and $r=n$ in equation \eqref{Hr}, we easily get that $\dot \lambda(\rho)=d,$ for  $d\geq 0$. Then, we have $\lambda (\rho)=d\rho+c$, for a real constant $c.$  

Now, we notice that for $d>0$, this straight line gives a profile curve  in $\gamma\times\R$, parametrized by $(x=\tanh(\rho/2),\lambda(x))$, that is symmetric with respect to $(0,c)$,   is increasing from $x=-1$ to $x=1$ and satisfy

$$
\lim_{x\to -1}\lambda(x)=-\infty\;\;\;\;\mbox{and}\;\;\;\;\lim_{x\to 1}\lambda(x)=+\infty.
$$
 
 This finishes the proof of $(a)$.

\

Now we prove (b). Taking $H_r=0$ in equation \eqref{Hr}, one easily gets that there exists a  constant $d$, with $0<d^r\le \cosh^{n-r}(\rho)$, such that 

\begin{equation} 
\label{eq1}
\cosh^{n-r}(\rho)\left(\frac{\dot{\lambda}^2}{1+\dot{\lambda}^2}\right )^{\frac{r}{2}}=d^r.
\end{equation}

We set $q=\frac{n-r}{r}$ and, by a straightforward computation, we obtain 

\begin{equation}
\label{eq2}
\dot\lambda^2(\rho)=\frac{d^2}{\cosh^{2q}(\rho)-d^2}.
\end{equation}

Here, we can choose $\dot\lambda$ to be the  positive square root in \eqref{eq2} since, up to a reflection across a slice in $\hyp^n\times \R$, the negative root  would give rise  to the same solution.  We divide our study in three cases, depending on $d>1,$ $d=1,$ $0<d<1.$  

\begin{itemize}

\item $d>1$.\\
 Let $a>0$ be such that $d=\cosh^{\frac{n-r}{r}}(a).$ Then, after integration, we have

\begin{equation} 
\label{eq4}
\lambda(\rho)=\int_a^{\rho}\frac{d}{\sqrt{\cosh^{2q}(\xi)-d^{2}}}d\xi.
\end{equation}

By the change of variables $v=\frac{\cosh(\xi)}{\cosh(a)}$, we can rewrite \eqref{eq4} as

\begin{equation} 
\label{eq5}
\lambda(\rho)=\cosh(a)\int_1^{\frac{\cosh(\rho)}{\cosh(a)}}\frac{1}{(v^{2q}-1)^{\frac{1}{2}}(\cosh^2(a)v^2-1)^{\frac{1}{2}}} dv.
\end{equation}

It is easy to see that the integral in \eqref{eq5} converges at $v=1$ and when $\rho$ goes to infinity. 


%

 Then we can define

\begin{equation} 
\label{eq6}
h_r(d):=\cosh(a)\int_1^{\infty}\frac{1}{(v^{2q}-1)^{\frac{1}{2}}(\cosh^2(a)v^2-1)^{\frac{1}{2}}} dv
\end{equation}

and  $2h_r(d)$ will be  the height of the hypersurface ${\mathcal M}_d^r.$

A simple computation shows that 
\begin{equation*}
h_r(d)\geq A\int_1^2 [(v-1)(\cosh (a) v-1)]^{-\frac{1}{2}}dt,
\end{equation*}
 
 where $A$ is a positive constant. The latter integral can be computed explicitly and  diverges when 
$a\longrightarrow 0,$ that is when $d\longrightarrow 1.$



  \

Moreover,  the limit when $a\longrightarrow \infty$  can be taken under the integral and

\begin{equation}
\label{eq9}
\lim_{a\to\infty}h_r(d)=\int_{1}^{\infty} v^{-1}(v^{2q}-1)^{-1/2}dv=\frac{\pi r}{2(n-r)},
\end{equation}

where in the last equality we use that $\int v^{-1}(v^{2q}-1)^{-1/2}dv= \frac{1}{q}\arctan(\sqrt{v^{2q}-1})+{\rm const}.$

 Finally, since

\begin{equation}
\label{S-derivative}
\frac{dh_r}{da}=-\sinh(a)\int_{1}^{\infty} (v^{2q}-1)^{-1/2}(\cosh^2 (a)v^2-1)^{-3/2}dv<0,
\end{equation}

 we conclude that the function $a\to h_r(a)$ decreases from $\infty$ to $\frac{\pi r}{2(n-r)},$  when $a$ increases from $0$ to $\infty$.

\item $d=1.$ 

By replacing  $d=1$ in equation \eqref{eq2} one has that

\begin{equation} 
\label{eq10}
\lambda(\rho)=\int_b^{\rho}\frac{1}{\sqrt{\cosh^{2q}(\xi)-1}} d\xi
\end{equation}

for some constant $b>0.$  It is easy to see that this profile curve tends to $-\infty $, when  $\rho\longrightarrow 0$, and tends to a finite value, when  $\rho\longrightarrow \infty$.

\item $0<d<1.$ 

 In this case one has

\begin{equation} 
\label{eq11}
\lambda(\rho)=d\int_0^{\rho}\frac{1}{\sqrt{\cosh^{2q}(\xi)-d^{2}}}d\xi.
\end{equation}

 The curve  is defined for every value of $\rho> 0$ and   can be extended by symmetry   to values  
$\rho<0.$  Moreover $\lambda$ is bounded. The corresponding hypersurface is an entire vertical graph with finite height.
\end{itemize}

\end{proof}

For future use, we prove a useful property of the hypersurfaces ${\mathcal M}_d^r$. 
 
\begin{prop}\label{sinal}
For a fixed $r<n$ and for any $d>1$, each hypersurface ${\mathcal M}_d^r$ satisfies:

\begin{equation*}
H_j>0, \ \ j<r \ \  {\rm and} \ \  H_j<0, \ \   r<j\leq n.
\end{equation*} 
\end{prop}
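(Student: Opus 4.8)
The plan is to read off the signs of the $H_j$ directly from the principal curvatures in \eqref{princ-curv}, using the first integral \eqref{eq1}–\eqref{eq2} to determine the sign of $\dot\lambda$ and $\ddot\lambda$ along the profile curve of ${\mathcal M}_d^r$ with $d>1$. Recall from the proof of Theorem \ref{classification-theorem}(b) that for $d>1$ the hypersurface consists of two vertical pieces, symmetric with respect to $\hyp^n\times\{0\}$, each a graph over $\rho\mapsto\lambda(\rho)$ defined for $\rho\geq\rho_d=\cosh^{-1}(d^{r/(n-r)})$ (equivalently $\rho\geq a$), with $\dot\lambda>0$ chosen as the positive root in \eqref{eq2}. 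Thus $\kappa_2=\cdots=\kappa_n=\dot\lambda(1+\dot\lambda^2)^{-1/2}\tanh(\rho)$ is strictly positive for $\rho>0$ (and the analysis on the reflected piece is symmetric, only flipping the overall sign of the normal, hence of all $\kappa_i$ simultaneously — so the statement about the $H_j$, which involves only products of $r$ or $j$ curvatures, is unaffected by which component or which orientation we pick; one should remark this).

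Next I would pin down the sign of $\kappa_1=\ddot\lambda(1+\dot\lambda^2)^{-3/2}$. Differentiating \eqref{eq2}, i.e. $\dot\lambda^2(\cosh^{2q}\rho-d^2)=d^2$, gives $2\dot\lambda\ddot\lambda(\cosh^{2q}\rho-d^2)+\dot\lambda^2\cdot 2q\cosh^{2q-1}\rho\sinh\rho=0$, so that
\begin{equation*}
\ddot\lambda=-\frac{q\,\dot\lambda\cosh^{2q-1}(\rho)\sinh(\rho)}{\cosh^{2q}(\rho)-d^2}<0
\end{equation*}
for all $\rho>a$, since $q=(n-r)/r>0$, $\dot\lambda>0$, and the denominator is positive. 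Hence $\kappa_1<0$ while $\kappa_2=\cdots=\kappa_n>0$ on the entire (say, upper) generating piece. Now plug this sign pattern into the definition $H_j=\binom nj^{-1}\sum_{i_1<\cdots<i_j}\kappa_{i_1}\cdots\kappa_{i_j}$: a product $\kappa_{i_1}\cdots\kappa_{i_j}$ either includes the index $1$ or not. The number of $j$-subsets of $\{1,\dots,n\}$ not containing $1$ is $\binom{n-1}{j}$, each contributing $(\kappa_2)^j>0$; the number containing $1$ is $\binom{n-1}{j-1}$, each contributing $\kappa_1(\kappa_2)^{j-1}<0$. Writing $b:=\dot\lambda(1+\dot\lambda^2)^{-1/2}\tanh\rho>0$ and $c:=-\kappa_1>0$, we get
\begin{equation*}
\binom nj H_j=\binom{n-1}{j}b^{\,j}-\binom{n-1}{j-1}\,c\,b^{\,j-1}=b^{\,j-1}\left(\binom{n-1}{j}b-\binom{n-1}{j-1}c\right).
\end{equation*}
So the sign of $H_j$ is the sign of $\binom{n-1}{j}b-\binom{n-1}{j-1}c$; for $j=n$ the first binomial vanishes and $H_n<0$ (consistent with $H_n=0$ being excluded for $d>1$, as $d=0$ is the only $H_n=0$ case when $r=n$ — here $r<n$ so indeed $H_n\ne0$).

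It remains to locate the threshold: $H_j>0$ iff $c/b<\binom{n-1}{j}/\binom{n-1}{j-1}=(n-j)/j$, i.e. iff $jc<(n-j)b$, i.e. $j(b+c)<nb$, i.e. $j/n<b/(b+c)$. The cleanest route is to observe that the single relation $H_r=0$ forces $c/b=(n-r)/r$ exactly (this is precisely equation \eqref{eq1} rephrased, or can be checked by substituting the explicit $\kappa_i$ into $H_r=0$ and using \eqref{eq2}). Granting $c/b=(n-r)/r$, the chain $c/b<(n-j)/j$ is equivalent to $(n-r)/r<(n-j)/j$, equivalent to $j<r$ (both sides being strictly decreasing in the index, since $t\mapsto(n-t)/t=n/t-1$ decreases); and $c/b>(n-j)/j$ iff $j>r$. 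This yields exactly $H_j>0$ for $j<r$ and $H_j<0$ for $r<j\leq n$, as claimed. \textbf{The only real point to be careful about} is verifying the identity $c/b=(n-r)/r$ cleanly from \eqref{eq1}: one substitutes $\kappa_1=\ddot\lambda(1+\dot\lambda^2)^{-3/2}$ and $\kappa_2=\dot\lambda(1+\dot\lambda^2)^{-1/2}\tanh\rho$ into $c/b=-\kappa_1/\kappa_2$, uses the formula for $\ddot\lambda$ derived above to get $c/b=q\cosh^{2q-1}\rho\sinh\rho/[(\cosh^{2q}\rho-d^2)\cdot(1+\dot\lambda^2)\tanh\rho]$, then replaces $1+\dot\lambda^2=\cosh^{2q}\rho/(\cosh^{2q}\rho-d^2)$ from \eqref{eq2}; the $(\cosh^{2q}\rho-d^2)$ factors and the hyperbolic functions collapse, leaving $c/b=q=(n-r)/r$ identically in $\rho$. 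Everything else is the elementary binomial bookkeeping above.
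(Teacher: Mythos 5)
Your proof is correct and follows essentially the same route as the paper's: both rest on the identity $nH_j=\bigl((n-j)\kappa_2+j\kappa_1\bigr)\kappa_2^{j-1}$ for a hypersurface with principal curvatures $\kappa_1,\kappa_2,\dots,\kappa_2$, together with $\kappa_2>0$ and $\kappa_1<0$ obtained by differentiating \eqref{eq2}. Your shortcut of reading the exact ratio $-\kappa_1/\kappa_2=(n-r)/r$ directly off $H_r=0$ and then comparing thresholds $(n-j)/j$ is only a mild repackaging of the paper's step of substituting the explicit expression \eqref{ddot-lambda} for $\ddot\lambda$ into \eqref{Hj} to reach \eqref{hj-final}, so the two arguments are essentially identical.
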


\begin{proof}

Let us compute $H_j$ for any $0<j\leq n.$ It is straightforward to see that 

\begin{equation}
\label{Hj}
nH_j=((n-j)\kappa_2+j\kappa_1)\kappa_2^{j-1},
\end{equation}

where $\kappa_1$ and $\kappa_2$ are defined as in \eqref{princ-curv}.

Notice that by deriving \eqref{eq2} one obtains that

\begin{equation}
\label{ddot-lambda}
 \ddot \lambda(\rho)=-\frac{(n-r)}{rd^2}\dot\lambda^3(\rho)\tanh(\rho)(\cosh(\rho))^{\frac{2(n-r)}{r}}.
\end{equation}

By replacing \eqref{princ-curv} and \eqref{ddot-lambda} in \eqref{Hj}, one obtains 

\begin{equation}
\label{hj-final}
H_j= \frac{n(r-j)}{rd^2}\kappa_2^{j-1}\tanh(\rho)(\cosh(\rho))^{\frac{2(n-r)}{r}}
\frac{\dot\lambda^3}{(1+\dot\lambda^2)^{\frac{3}{2}}}.
\end{equation}

This proves the result, since $ \kappa_2$ and $\dot\lambda$ are positive.

\end{proof}

\

Let $\Pi$ and $\bar{\Pi}$ be  totally geodesic  hyperplanes of $\hyp^n\times\{0\}$, where $\Pi$ passes through the origin.  Let $\gamma$ and $\bar{\gamma}$ be the geodesics that are, respectively, orthogonal to $\Pi$ at $\sigma$ and to $\bar{\Pi}$ at a point $p$. Let $\Phi$ be an isometry of the ambient space that takes $\Pi$ into $\bar{\Pi}$, takes $\gamma$ into $\bar{\gamma}$ and   that preserves the $t$-coordinate. We notice that, by applying  $\Phi$ to each family ${\mathcal M}^r_d$ constructed in Theorem \ref{classification-theorem}, we obtain a one parameter family of hypersurfaces invariant under hyperbolic translations along the geodesics of $\bar{\Pi}$ passing through $p$. In the next sections, by abuse of notation,  we will denote by ${\mathcal M}^r_d$ any  hypersurfaces obtained from ${\mathcal M}^r_d$ applying  an isometry like $\Phi.$

\section{Maximum Principle and Asymptotic  Theorems}
\label{asymptotic-section}

In this section, we use the translationally invariant hypersurfaces ${\mathcal M}^r_d$, constructed above, and a maximum principle, in order to investigate
how the boundary behaviour of a hypersurface with $H_r=0$ contained in a halfspace, constrains   the behaviour of the hypersurface at finite points.
Moreover we prove an obstruction result for hypersurfaces with $H_r=0$ and a given boundary.

\smallskip

The suitable version of maximum principle for our purposes is stated below.  For further details about such generalized maximum principles, see \cite{ENS},  \cite{FS} \cite{HL1},\cite{HL2}.

\smallskip
\smallskip
{\bf Maximum Principle} \cite[Theorem  2.a]{FS} {\em Let $M$ and $M'$ two  oriented hypersurfaces with $H_r=H_r'\equiv 0,$ tangent at a point $p,$ with normal vector pointing in the same direction. Suppose that $M$ remains on one side of $M'$  in a neighborhood of $p.$ Suppose further that $H_j'(p)\geq 0,$ $1\leq j\leq r$ and either $H_{r+1}\not=0$ or  $H'_{r+1}\not=0.$  Then $M$ and $M'$ coincide in a neighborhood of $p.$ }

\smallskip
\smallskip

Given $r\in\{1,\dots,n\}$, and a totally geodesic hyperplane $\Pi$ in $\hyp^n\times\{0\},$ we consider the hypersurfaces ${\mathcal M}^r_d$, with $d>1$, described in Section \ref{invariant-hypersurfaces-section} (see the last sentence of  Section \ref{invariant-hypersurfaces-section}). We notice that, by Proposition \ref{sinal}, all the hypersurfaces ${\mathcal M}^r_d$, $d>1$, satisfy the assumptions of $M'$ in the maximum principle.

 Denote by $Q_{\Pi}$   the  halfspace  determined by $\Pi\times\R$ which contains  $Z_{\rho_d}^+.$ Clearly,  any  vertical
translation of the hypersurface ${\mathcal M}^r_d$ is contained in $Z_{\rho_d}^+$ and, moreover, any vertical translation of
${\mathcal M}^r_d$ is arbitrarily close to $\partial Q_\Pi=\Pi\times\R,$  provided $d$ is sufficiently close to one.

\smallskip

The proof of Theorem \ref{iperplano} below is inspired by that of  \cite[Theorems 3.2, 4.5]{N-SE-T}. 

\

\begin{thm}
\label{iperplano}
Let $M$ be a hypersurface, with $H_r=0$, properly immersed in
$\hyp^n\times\R$. Let $P$ be a vertical  hyperplane
and
$P_+$ one of the two halfspaces determined by $P.$ If $\partial M \subset\overline{ P_+}$ ($\partial M$  possibly empty) and 
$\partial_{\infty}M\cap(\partial_{\infty}\hyp^n\times\R)\subset\partial_{\infty} P_+,$  then
$M\subset \overline{ P_+}$.
\end{thm}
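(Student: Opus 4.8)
The plan is to argue by contradiction, using the translationally invariant barriers $\mathcal{M}^r_d$ as a family of obstacles that can be slid in from infinity. Suppose $M\not\subset\overline{P_+}$, so $M$ enters the open halfspace $P_-=(\hyp^n\times\R)\setminus\overline{P_+}$. Write $P=\Pi\times\R$ for a totally geodesic hyperplane $\Pi\subset\hyp^n$, and choose, for each $d>1$, the hypersurface $\mathcal{M}^r_d$ associated to $\Pi$ (via an isometry $\Phi$ as in the last paragraph of Section \ref{invariant-hypersurfaces-section}), positioned so that $\mathcal{M}^r_d\subset \overline{Z^+_{\rho_d}}\subset\overline{Q_\Pi}$ and so that the mean-convex side of $\mathcal{M}^r_d$ faces $P$. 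Because $\rho_d\to 0$ as $d\to 1^+$, these barriers are contained in a halfspace of the form $Q_\Pi$ on the $P_-$ side (or, more precisely, can be taken arbitrarily close to $P$), so for $d$ close enough to $1$ the barrier $\mathcal{M}^r_d$ lies entirely on one side and is disjoint from $\overline{P_+}$, while for small $\rho_d$ it sweeps out a neighborhood of $P$ inside $P_-$.

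Next I would set up the sliding argument. The isometries at our disposal are the vertical translations $T_s:(x,t)\mapsto(x,t+s)$ and, crucially, the hyperbolic translations along geodesics of $\hyp^n$ orthogonal to $\Pi$, which move $\mathcal{M}^r_d$ rigidly "towards" $P$ and across it; combined with the freedom in $d$, this gives a two-parameter family of barriers $\{\mathcal{M}^r_{d,\tau}\}$ (here $\tau$ the hyperbolic-translation parameter) foliating a region of $P_-$ near $P$. Starting from a position where $\mathcal{M}^r_{d,\tau}$ is disjoint from $M$ (possible since, for $\tau$ large in the direction away from $M$, or $d$ close to $1$, the barrier is squeezed against $P$ while $M\cap P_-$ — assumed nonempty — sits at a definite distance inside $P_-$ on a compact piece, once we argue the excursion of $M$ into $P_-$ is "bounded away" using properness), I decrease $\tau$ until the first contact. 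The properness of $M$ and the control on $\partial_\infty M$ guarantee the first contact is interior: indeed $\partial M\subset\overline{P_+}$ keeps the boundary away from $P_-$, and the asymptotic hypothesis $\partial_\infty M\cap(\partial_\infty\hyp^n\times\R)\subset\partial_\infty P_+$ together with the known asymptotic boundary of $\mathcal{M}^r_d$ (an $(n-1)$-sphere sitting over $S^{n-1}_+\times\{0\}$ on the $Q_\Pi$ side) prevents contact "at infinity" — a point of tangency cannot escape to $\partial_\infty$.

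At the first interior contact point $p$, the hypersurfaces $M$ and $\mathcal{M}^r_{d,\tau}$ are tangent, $M$ lies locally on one side, and we may arrange the unit normals to agree. By Proposition \ref{sinal}, $\mathcal{M}^r_{d,\tau}$ satisfies $H_j>0$ for $j<r$, hence $H'_j(p)\ge 0$ for $1\le j\le r$, and $H'_{r+1}<0\ne 0$; both hypersurfaces have $H_r=0$. The Maximum Principle of \cite{FS} then forces $M$ and $\mathcal{M}^r_{d,\tau}$ to coincide in a neighborhood of $p$, and a standard connectedness/propagation argument (the coincidence set is open and closed in a suitable sense) forces $M$ to be a piece of $\mathcal{M}^r_{d,\tau}$ globally — contradicting, for instance, that $M$ was supposed to cross into $P_-$ while the whole barrier lies on one side, or contradicting the asymptotic boundary hypothesis since $\partial_\infty\mathcal{M}^r_{d,\tau}$ meets $\partial_\infty\hyp^n\times\R$ outside $\partial_\infty P_+$. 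This contradiction proves $M\subset\overline{P_+}$.

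The main obstacle, I expect, is the "first contact is interior and is actually attained" step: one must rule out that as $\tau$ varies the infimum of the distance between $M$ and the barrier is zero but not realized — this is exactly where properness of $M$ is essential (to get a compact exhaustion argument, controlling the part of $M$ in any bounded region) and where the precise description of $\partial_\infty\mathcal{M}^r_d$ from Theorem \ref{classification-theorem}, matched against the hypothesis on $\partial_\infty M$, must be used to prevent the contact point from drifting off to the asymptotic boundary. Making the sliding family genuinely sweep a full one-sided neighborhood of $P$ — i.e. checking that the hyperbolic translations of $\mathcal{M}^r_d$ together with the limit $d\to 1^+$ cover all of $P_-$ near $P$ — is the other point requiring care.
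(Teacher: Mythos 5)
Your overall strategy is the paper's: slide the translation--invariant barriers $\mathcal{M}^r_d$ in from infinity, use Proposition \ref{sinal} to check the hypotheses of the Fontenele--Silva maximum principle, and derive a contradiction at a first interior tangency. But the two steps you yourself single out as delicate are exactly where the argument is incomplete, and the way you set up the barriers makes one of them genuinely fail. First, the \emph{initial disjoint position}: you justify it by saying that $M\cap P_-$ ``sits at a definite distance inside $P_-$ on a compact piece'' by properness. This is false in general: the hypothesis only controls $\partial_\infty M\cap(\partial_\infty\hyp^n\times\R)$, so $M$ may enter $P_-$ and escape to $\hyp^n\times\{\pm\infty\}$ there; $M\cap P_-$ need be neither compact nor at positive distance from the barrier family. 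The mechanism that actually works (and is what the paper uses) is that $\mathcal{M}^r_d$ has \emph{finite height} $2h_r(d)$, so its horizontal translates toward $\partial_\infty\hyp^n$ collapse onto a vertical segment of finite height sitting in $(\partial_\infty\hyp^n\times\R)\setminus\partial_\infty P_+$; if every translate met $M$, a sequence of intersection points would produce a finite-height asymptotic point of $M$ in that forbidden region. You never invoke the bounded height, and without it there is no starting position for the sweep.

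Second, \emph{contact at infinity at the end of the sweep}. You attach the barriers to $P=\Pi\times\R$ itself, so the asymptotic boundary of $\mathcal{M}^r_{d}$ (at $\tau=0$) contains the cylinder $\partial_\infty\Pi\times[-h_r(d),h_r(d)]\subset\partial_\infty P\subset\partial_\infty P_+$ --- precisely where $\partial_\infty M$ is \emph{allowed} to accumulate. Hence as $\tau\to0$ the tangency can drift off to this common asymptotic set and the maximum principle is never applicable; your remark that the asymptotic boundaries ``must be matched'' identifies the problem but does not resolve it. The paper circumvents this by never using barriers attached to $P$: it fixes auxiliary totally geodesic hyperplanes $\Pi$ with $Q_\Pi\subset(\hyp^n\times\R)\setminus P_+$ and $\partial_\infty\Pi\cap\partial_\infty P=\emptyset$, so that every barrier and every translate used in the sweep has asymptotic boundary strictly inside the region where $\partial_\infty M$ has no finite-height points; it then shows $M$ avoids $Z^+_{\rho_d}$ for all $d>1$ and all vertical translates, lets $d\to1$ to get $M\cap Q_\Pi=\emptyset$, and finally exhausts the open complement of $\overline{P_+}$ by the union of such $Q_\Pi$. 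Your version could be repaired along these lines (or by stopping the sweep at $\tau>0$ and taking a union over $d,\tau$ and vertical translates), but as written both the entry point and the exit point of the sliding argument are gaps.
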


\begin{proof}

 Let $\Pi \subset \hi^ n\times\{0\}$ be a totally geodesic hyperplane  and $Q_{\Pi} $ the half space determined by $\Pi$, chosen such that  

\begin{equation*}
(\mathcal{P}) \   \  \ \  \  \     \  \  \  \ Q_{\Pi} \subset  (\h^ n\times \R)\setminus P_+, \  \  \pain\Pi\cap \pain P=\emptyset
\end{equation*}



We fix a $d>1$ and we consider the family of hypersurfaces ${\mathcal M}^r_d$  contained in $Z_{\rho_d}^+\subset Q_{\Pi}$. The following two properties hold:

\begin{enumerate}
\item[{\rm (I)}] \label{item.asymptotic}The intersection of $\partial_{\infty}M$ with 
 $\partial_{\infty}(\h^ n\times\R)\setminus \partial_{\infty} P_+$ 
  contains no points at finite
height. 
\item[{\rm (II)}]  The asymptotic boundary of any vertical translation of ${\mathcal M}^r_d$ is  contained in the
asymptotic
boundary of  $ Q_{\Pi}\subset\h^n\times\R\setminus  P_+.$

\end{enumerate}

We will get the result  by applying the maximum 
principle between the hypersurface $M$ and some  isometric copy of ${\mathcal M}^r_d$'s. 

\smallskip
 Let $\gamma$ be the geodesic in $\hyp^n\times \{0\}$, orthogonal to $\Pi$ at a point $p\in\Pi.$  We parametrize $\gamma$ by the signed distance to $p,$  say $s,$ with orientation pointing towards $Q_{\Pi}.$  For any $s,$ we consider the  isometry of $\h^ n\times\R$ that preserves the $t$-coordinate and  takes $\Pi$ into the geodesic hyperplane orthogonal to $\gamma$ at a distance $s$ from $p$. 
   By letting $s\longrightarrow \infty$ and by applying the above isometries, we obtain a family of hypersurfaces ${\mathcal M}^r_d(s)$, isometric to ${\mathcal M}^r_d$, that collapses to a vertical segment in $(\partial_{\infty}\h^n\times\R)\cap\partial_\infty Q_{\Pi}$ of height $2h_r(d)$.  We claim that, for some $s,$ $M$  and ${\mathcal M}^r_d(s)$  are disjoint. In fact, suppose that, when $s\longrightarrow \infty$, ${\mathcal M}^r_d(s)$ always have a nonempty
intersection with $ M$. Then, there would exists a point of 
the asymptotic boundary of $M$ at finite height in
 $\partial_{\infty}(\h^ n\times\R)\setminus \partial_{\infty}P_+$, giving a contradiction with (I). Then, the claim is proved.  Now, starting with a ${\mathcal M}^r_d(s)$ disjoint from $M,$ 
 we apply the (inverse) isometries to come  back towards  the original position, that is, we let $s\longrightarrow 0$.  Then, either we find a first intersection point between $M$ and ${\mathcal M}^r_d(s)$, for some $s,$  contradicting the maximum principle, or we reach the original position without  touching $M$. The same process can be done with any vertical
translation of ${\mathcal M}^r_d$ and we can conclude that
 $M$ is contained in $\h^ n\times\R\setminus Z_{\rho_d}^+$. 
 
 Now, we let $d\to 1$ and the maximum principle yields that  $M$  is contained in the closed halfspace
$\h^ n\times\R\setminus Q_{\Pi}$. Since this holds for any totally geodesic hyperplane $\Pi$ satisfying 
the property  $(\mathcal P),$  we conclude that $M$ is contained in the closure of 
$P_+$.

\smallskip

\end{proof}

Let us extend Theorem \ref{iperplano} to the case of a more general asymptotic boundary.


\smallskip

\begin{defi}\label{admissible}
 Let $\Pi_1,\dots\Pi_k$ be a collection of hyperplanes in $\hyp^n$ such that $\partial_{\infty}\Pi_i=S_i$, where for $i=1,\dots,k,$  $S_i$ is an $(n-2)$-sphere in  $\partial_{\infty}\hyp^n$.  We say that the hyperplanes $\Pi_1,\dots\Pi_k$ are an  {\it admissible collection} if it is possible to choose open (n-1)-spheres $B_1,\dots,B_k$  in  $\partial_{\infty}\hyp^n$, bounded by $S_1,\dots S_k$, which are mutually disjoint.
 \end{defi}

\begin{defi}\label{halfspace-def}
Let $\Pi_1,\dots\Pi_k$ be an admissible collection of hyperplanes in $\hyp^n$ and let $P_j=\Pi_j\times\R$, $j=1,\dots,k,$ be the corresponding  vertical hyperplanes in $\hyp^n\times\R$.  Denote by $\tilde P_j$ the half-space such that 
${\displaystyle \cup_{i\neq j}}\left (\Pi_i\times\R\right )\subset\tilde P_j.$  We define $P(\Pi_1,\dots,\Pi_k):=\cap_{i=1}^k\tilde P_i.$
\end{defi}

Notice that $\Pi_i$ and $\Pi_j,$  $i\not=j,$ can meet at most at one point. This yields that $\partial_{\infty}P_i$ and $\partial_{\infty}P_j,$  $i\not=j,$ can meet at most 
at a vertical line.
\

\begin{coro}
\label{halfspace-coro}
Let $M$ be  a complete hypersurface with $H_r=0,$ possibly with finite boundary, properly immersed
in
$\hyp^n\times\R$ and let
$\Gamma=\partial_{\infty}M\cap (\partial_{\infty}\hyp^n\times\R).$ Let $\Pi_1,\dots,
\Pi_k$  an  admissible collection of hyperplanes. If  $\Gamma\subset
\partial_{\infty}P(\Pi_1,\dots, \Pi_k)$ and $\partial M\subset \overline{ P(\Pi_1,\dots, \Pi_k)},$ then $M$ is
contained in
$\overline{P(\Pi_1,\dots, \Pi_k)}$.
\end{coro}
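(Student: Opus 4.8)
The plan is to obtain Corollary~\ref{halfspace-coro} from $k$ separate applications of Theorem~\ref{iperplano}, one for each of the vertical half-spaces $\tilde P_1,\dots,\tilde P_k$ whose intersection defines $P(\Pi_1,\dots,\Pi_k)$, and then to reassemble the resulting inclusions into the single containment $M\subset\overline{P(\Pi_1,\dots,\Pi_k)}$.

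First I would fix $i\in\{1,\dots,k\}$ and apply Theorem~\ref{iperplano} to $M$ with the vertical hyperplane $P:=P_i=\Pi_i\times\R$ and the half-space $P_+:=\tilde P_i$. Its two hypotheses are readily checked. The boundary condition is immediate, since $\partial M\subset\overline{P(\Pi_1,\dots,\Pi_k)}=\overline{\bigcap_{j}\tilde P_j}\subset\overline{\tilde P_i}$. For the asymptotic condition one uses that the asymptotic boundary is monotone under inclusion of subsets of $\hyp^n\times\R$ (if $\Omega_1\subset\Omega_2$ then $\partial_\infty\Omega_1\subset\partial_\infty\Omega_2$, since a sequence in $\Omega_1$ is a sequence in $\Omega_2$), so the inclusion $P(\Pi_1,\dots,\Pi_k)=\bigcap_{j}\tilde P_j\subset\tilde P_i$ yields
\[
\Gamma=\partial_\infty M\cap(\partial_\infty\hyp^n\times\R)\subset\partial_\infty P(\Pi_1,\dots,\Pi_k)\subset\partial_\infty\tilde P_i .
\]
Theorem~\ref{iperplano} then gives $M\subset\overline{\tilde P_i}$, and since $i$ was arbitrary, $M\subset\bigcap_{i=1}^k\overline{\tilde P_i}$.

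It remains to identify $\bigcap_{i=1}^k\overline{\tilde P_i}$ with $\overline{\bigcap_{i=1}^k\tilde P_i}=\overline{P(\Pi_1,\dots,\Pi_k)}$. Writing $\tilde P_i=H_i\times\R$ with $H_i$ an open geodesic half-space of $\hyp^n$, this reduces to proving $\bigcap_i\overline{H_i}=\overline{\bigcap_i H_i}$ in $\hyp^n$, and this is the step where admissibility is genuinely needed: choosing the disjoint open balls $B_1,\dots,B_k$ of Definition~\ref{admissible} leaves $\partial_\infty\hyp^n\setminus\bigcup_i B_i$ nonempty, whence $\bigcap_i H_i$ is a nonempty open set; then the closed, geodesically convex set $\bigcap_i\overline{H_i}$ has interior exactly $\bigcap_i H_i$ and therefore coincides with the closure of that interior. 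Taking products with $\R$ gives $M\subset\overline{P(\Pi_1,\dots,\Pi_k)}$. The main (and essentially only) subtle point is this last identity: it fails for arbitrary families of sets, so the argument must use admissibility in an essential way — which, in addition, guarantees that $P(\Pi_1,\dots,\Pi_k)$ is nonempty and that the hyperplanes sit in sufficiently general position at infinity, recalling that distinct $\Pi_i$ meet at most at one point. If one preferred to bypass the recombination step, the conclusion could instead be reached by rerunning the sliding-barrier argument of Theorem~\ref{iperplano} directly: for every totally geodesic hyperplane $\Pi$ of $\hyp^n\times\{0\}$ with $Q_\Pi\cap\overline{P(\Pi_1,\dots,\Pi_k)}=\emptyset$ and $\partial_\infty\Pi$ disjoint from every $\partial_\infty\Pi_i$, one slides the barriers ${\mathcal M}^r_d(s)$ in from infinity, concludes that $M$ misses $Z_{\rho_d}^+$, and then lets $d\to 1$.
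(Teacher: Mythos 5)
Your proof is correct and is essentially the argument the paper intends (the corollary is stated without a written proof, as an immediate consequence of Theorem~\ref{iperplano}): apply that theorem once for each half-space $\tilde P_i$, using monotonicity of $\partial_\infty$ and the inclusion $\overline{P(\Pi_1,\dots,\Pi_k)}\subset\overline{\tilde P_i}$ to verify its hypotheses, and then intersect. Your added care with the identity $\bigcap_i\overline{\tilde P_i}=\overline{\bigcap_i\tilde P_i}$, via convexity and the nonemptiness guaranteed by admissibility, is a welcome but routine refinement of the same approach.
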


\
  Next result establishes some  obstruction to the existence of a hypersurface in $\hyp^n\times\R$  with $H_r=0:$ 
in particular the shape of the asymptotic boundary of a hypersurface may prevents the hypersurface to have $H_r=0.$
The result  is a  generalization of  \cite[Corollary 2.2]{ST2} and \cite[Theorem 4.6]{ N-SE-T}.

\

\begin{thm}\label{T.slab.catenoid}
Let $S_\infty \subset \partial_{\infty} \hyp^n \times \R$ be a closed set 
whose  vertical projection  on $\partial_{\infty} \hyp^n \times \{0\}$ omits
an open subset. Assume that $S_\infty $ is contained in an open slab whose 
height is equal to $\frac{r\pi}{n-r}$.  Then, there is no  connected  hypersurface $M$ with $H_r=0,$ $\partial M=\emptyset,$
properly
embedded 
 in $\hyp^n \times \R,$  with asymptotic boundary $S_\infty$.

\end{thm}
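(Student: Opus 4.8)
The plan is to argue by contradiction, using the family $\{\mathcal M^r_d\}_{d>1}$ of Theorem \ref{classification-theorem} as barriers together with the Maximum Principle quoted above, in the spirit of the proof of Theorem \ref{iperplano}. Suppose such an $M$ exists. After a vertical translation we may assume the slab is $\hyp^n\times(-c,c)$ with $2c=\frac{r\pi}{n-r}$ (in particular $r<n$, so that Proposition \ref{sinal} applies), hence $S_\infty\subset \hyp^n\times(-c,c)$; shrinking $U$ we may assume it is a small open geodesic ball in $\partial_\infty\hyp^n$. A first elementary remark is that $M$ is bounded in the $t$-direction: since $\overline{\hyp^n\times\R}$ is compact, a sequence in $M$ with $|t|\to\infty$ would subconverge to an asymptotic point of $M$ at infinite height, contradicting $\partial_\infty M=S_\infty\subset\partial_\infty\hyp^n\times\R$. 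Thus $M\subset \hyp^n\times[-T,T]$ for some $T$. (If $S_\infty=\varnothing$ then $M$ would be compact, which is impossible since a compact hypersurface has a point where all principal curvatures have the same sign, whence $H_r\neq0$ there; so we may assume $S_\infty\neq\varnothing$.)

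Next I would choose the barrier. Pick a totally geodesic hyperplane $\Pi$ whose asymptotic boundary $\partial_\infty\Pi$ is a small $(n-2)$-sphere contained in $U$, and let $C'$ be the spherical cap of $\partial_\infty\hyp^n$ bounded by $\partial_\infty\Pi$ with $\partial_\infty\hyp^n\setminus U\subset C'$; then the vertical projection of $S_\infty$ lies in $\partial_\infty\hyp^n\setminus U\subset C'$. Fix any $d>1$ with $h_r(d)>c$ (possible since $h_r(d)\to\infty$ as $d\to1$) and let $\mathcal K$ be the copy of $\mathcal M^r_d$ obtained, via an isometry $\Phi$ as in the last paragraph of Section \ref{invariant-hypersurfaces-section}, by revolving about $\Pi$ so as to ``open toward'' $C'$. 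By Theorem \ref{classification-theorem}, $\mathcal K$ is complete, properly embedded, has $H_r=0$ and finite height $2h_r(d)$, bounds a ``lens'' region $W$ with $\partial_\infty W=\overline{C'}\times[-h_r(d),h_r(d)]$ and with $\partial_\infty\mathcal K$ the topological sphere bounding that solid pillbox, and it separates $\hyp^n\times\R$ into the open pocket $W^\circ$ and the open exterior $\mathcal O$. By Proposition \ref{sinal}, $\mathcal K$ satisfies $H_j>0$ for $j<r$, $H_r=0$, $H_{r+1}<0$, so it verifies the hypotheses imposed on $M'$ in the Maximum Principle, and the equation $H_r=0$ is elliptic and analytic along $\mathcal K$.

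The key geometric point is that $\partial_\infty M=S_\infty$ lies \emph{inside} the barrier's asymptotic pocket while $\partial_\infty\mathcal K$ itself misses $S_\infty$. Indeed $S_\infty\subset C'\times(-c,c)$ lies in the interior of the pillbox $\overline{C'}\times[-h_r(d),h_r(d)]$; on the other hand the two hemispherical caps of $\partial_\infty\mathcal K$ sit in the slices $t=\pm h_r(d)$, which are outside the open slab since $h_r(d)>c$, and the cylindrical part of $\partial_\infty\mathcal K$ sits over $\partial_\infty\Pi\subset U$, hence misses the projection of $S_\infty$. Since $\overline M\cap\partial_\infty(\hyp^n\times\R)=\partial_\infty M=S_\infty$ meets the asymptotic interior of $W$, which is disjoint from $\partial_\infty\mathcal O$, $M$ cannot be contained in $\overline{\mathcal O}$; therefore $M$ enters the open pocket $W^\circ$. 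Now I would run an Alexandrov-type vertical sliding: consider the translates $\mathcal K+(0,v)$; for $v$ very negative $\mathcal K+(0,v)$ lies below $\hyp^n\times[-T,T]\supset M$, hence is disjoint from $M$, with $M$ in its exterior component. As $v$ increases, $M$ meets $W^\circ+(0,v)$ for all $v$ near $0$, so $M$ cannot remain in the exterior component, and there is a first $v^*<0$ at which $M$ touches $\mathcal K+(0,v^*)$; at this first contact $M$ lies on the exterior side of $\mathcal K+(0,v^*)$ near the contact point and is tangent to it there. The Maximum Principle then forces $M=\mathcal K+(0,v^*)$ in a neighbourhood, hence everywhere by connectedness and analyticity. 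But then $\partial_\infty M=\partial_\infty\mathcal K+(0,v^*)$ has two hemispherical pieces in the slices $t=v^*\pm h_r(d)$, i.e. vertical extent $2h_r(d)>\frac{r\pi}{n-r}$, so it cannot lie in any open slab of height $\frac{r\pi}{n-r}$ — contradicting the hypothesis on $S_\infty$.

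The step I expect to be the main obstacle is the one glossed over in the sliding argument: guaranteeing that the ``first contact'' as $v\downarrow v^*$ is an honest interior tangency, rather than the contact points escaping to infinity (which would mean $\partial_\infty\mathcal K+(0,v^*)$ runs into $S_\infty$). This is precisely what dictates the choice of $\Pi$ with $\partial_\infty\Pi\subset U$ — so the cylindrical part of $\partial_\infty\mathcal K$ stays over $U$, away from the projection of $S_\infty$ — together with $v^*<0$ and $h_r(d)>c$, which keep the hemispherical caps of $\partial_\infty\mathcal K+(0,v)$ outside the slab for the relevant range of $v$; it is handled exactly as the analogous escape issue in the proof of Theorem \ref{iperplano} and in \cite{ST2}, \cite{N-SE-T}.
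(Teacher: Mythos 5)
Your strategy is genuinely different from the paper's, and the step you yourself flag as ``the main obstacle'' is where it breaks. The paper first confines $M$ itself (not merely $S_\infty$) to a closed sub-slab $\mathcal B=\{t_0\le t\le \tfrac{r\pi}{n-r}-t_0\}$ using the height estimate of \cite[Proposition 3.1]{ENS}, then slides \emph{horizontally} a \emph{compact} barrier $K=\mathcal B\cap\mathcal C$, where $\mathcal C$ is a rotational $n$-catenoid with $H_r=0$ from \cite{ENS} whose two asymptotic spheres lie strictly above and strictly below $\mathcal B$; compactness of $K$, together with $\partial K\subset\partial\mathcal B$ staying off $M$, makes the first contact automatically a finite interior tangency. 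Your escape issue is \emph{not} ``handled exactly as in Theorem \ref{iperplano}'': there the barriers collapse at infinity onto a vertical segment lying in a region where, by hypothesis, $\partial_\infty M$ has no points at finite height, so escape is impossible; here the dangerous part of $\partial_\infty\mathcal K+(0,v)$ is not the cylinder over $\partial_\infty\Pi\subset U$ but the two horizontal caps, and these sweep through exactly the heights occupied by $S_\infty$.

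Concretely, let $\tau_{\min}=\min\{t:(\theta,t)\in S_\infty\}\in(-c,c)$ and $v^*=\inf\{v:\ M\cap(\overline W+(0,v))\neq\emptyset\}$. Your claim that the caps of $\partial_\infty\mathcal K+(0,v)$ stay outside the slab ``for the relevant range of $v$'' is false: the top cap sits at height $v+h_r(d)$, which runs through all of $(-c,c)$ as $v$ runs over $(-c-h_r(d),\,c-h_r(d))$, and this interval contains the possible values of $v^*$ (your own argument only gives $v^*\le \tau_{\min}-h_r(d)$, while $M$ is confined only to $[-T,T]$ with $T$ uncontrolled). If no finite contact occurs at $v=v^*$, the contact points $p_j=(x_j,t_j)\in M\cap(\overline W+(0,v_j))$, $v_j\downarrow v^*$, diverge to some $(\theta,\tau)\in S_\infty$ with $|\tau-v^*|\le h_r(d)$; combined with $v^*\le\tau_{\min}-h_r(d)$ this forces $v^*=\tau_{\min}-h_r(d)$ and $\tau=\tau_{\min}$. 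In that configuration the upper sheet of $\mathcal K+(0,v^*)$ is asymptotic to the slice $t=\tau_{\min}$, and one checks that $M$ lies strictly above that sheet over $Z^+_a$ while approaching it only at infinity, over an interior point of $C'$. Nothing in the paper excludes such a tangency at infinity; doing so would require a maximum principle at infinity for the sheets of $\mathcal M^r_d$, which is precisely what is not available. To repair the argument you should, as the paper does, first invoke the height estimate to put $M$ itself strictly inside the open slab and then use a barrier that is compact after truncation by that slab (or whose asymptotic boundary stays uniformly away from $S_\infty$ throughout the whole sliding).
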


\begin{proof}

\smallskip

Assume, by contradiction,
that there exists   a
hypersurface  $M$, satisfying the assumptions and with $\partial_{\infty} M=S_{\infty}$.  Then, up to a vertical
translation, we
can assume that $M$ is
contained in the  slab  ${\mathcal{B}}:=\{(p,y)\in \hyp^n \times\R;\; t_0\leq t\leq \frac{r\pi}{n-r}-t_0\}$ for some $t_0>0$ (see \cite[Proposition 3.1]{ENS})
and  $S_\infty\subset \partial_{\infty} \mathcal{B}$.
  As the vertical projection  of $S_{\infty}$  omits
an open subset, say  $U$,  by Theorem \ref{iperplano}, we find   a totally geodesic hyperplane 
$\Pi\subset \hyp^n\times \{0\} $ such that a component, say $\Pi^+$,  of 
$\hyp^n \times \{0\} \setminus \Pi$ satisfies:
\begin{enumerate}
\item $\partial_{\infty}\Pi^+ \subset U$.
\item $M \cap  ( \overline{\Pi}^+ \times \R) =\emptyset$.
\end{enumerate}
Let ${\mathcal C}\subset \hyp^n \times (0,\frac{r\pi}{n-r})$ be
any $n$-catenoid with $H_r=0$, such that a component of its asymptotic boundary stays strictly above 
$\partial_{\infty}\mathcal{S}$ and the other component stays strictly below $\partial_{\infty}\mathcal{S}$. The existence of such catenoids is proved in \cite[Theorem 2.1]{ENS}. There, it is also proved that the the $j$-mean curvatures of the catenoids satisfy $H_j(p)< 0,$ $1\leq j\leq r$  and $H_{r+1}<0$ (see \cite[Proposition 2.2]{ENS}).

 We define $K={\mathcal B}\cap{\mathcal C}.$ $K$ is compact, connected and its 
boundary lies in the boundary of the slab ${\mathcal{B}}$.

Let $q\in M$ be a point,  let $q_0\in \hyp^n \times \{0\}$ be the vertical
projection of $q$ and let   $p_\infty$ be a point in $ \partial_{\infty}\Pi^+.$ 
 Denote by $\widetilde \gamma\subset  \hyp^n \times \{0\}  $
the complete geodesic passing through $q_0$ such that 
$p_\infty \in \partial_{\infty}\widetilde \gamma$.  We can translate $K$ along $\widetilde \gamma$ (with the usual isometry  of $\hyp^n\times\R$ that preserves the $t$-coordinate),  such that the translated $K$  is contained in
the halfspace $\Pi^+ \times \R$.

Now we  come back translating $K$ towards $M$ along $\widetilde \gamma.$ 
Observe that the boundary of the translated
copies of $K$  does not touch $M$. Therefore, doing the 
translations of $K$ along $\widetilde  \gamma$ we find a first interior point of contact
between $M$ and a translated copy  of $K$. Hence, $M={\mathcal C}$ by the maximum
principle, which leads to a contradiction and  completes the proof.
\end{proof}

\begin{rem}
Similar  techniques may be applied to generalise non existence results analogous to \cite[Theorem 4.6]{N-SE-T},  \cite[Theorem 2.1]{ST2}.

\end{rem}

\section{Finite Strong Total Curvature}
\label{tot-curv-section}

In this section, we deal with  a general isometric immersion  $X: M\longrightarrow\hyp^n\times\R$  without any  assumption on $H_r$.  The notion of strong total curvature, introduced by the first author and M. Do Carmo  \cite{DE} for hypersurfaces in $\R^{n+1}$, is defined as a  special norm  of  the shape operator, $A$, of $M=X(M)$.

\

Let  $p_0$ be a fixed point of  $M$ and denote by $\xi(p)$ the intrinsic distance in $M$ from $p$ to $p_0.$ 

Let $\Omega\subset M$.  Given any $q\geq 1$, we define the following two function spaces.
\begin{itemize} 
\item $L^q _s (\Omega)$ is the  {\it weighted space } of weight $s\in\R$ of all measurable functions of finite norm
$$
||u||_{L^q_s (\Omega)}=\left ( \int_\Omega |u|^q \xi^{-qs-n} \;dM\right)^{1/q}\hspace{-15pt}.
$$
\item  $W^{1,q} _s(\Omega)$ is the {\it weighted Sobolev space} of weight $s$ of all measurable functions of finite norm

\begin{equation*}
||u||_{W^{1,q} _s(\Omega)}=||u||_{L^q _{s} (\Omega)}+||\nabla u||_{L^q _{s-1} (\Omega)},
\end{equation*}
\noindent where $\nabla u$ is the gradient of $u$ in $M$.
\end{itemize}

\

  The latter was used by Bartnik, in a pioneer paper \cite{B}, to   define a suitable decay at infinity of the metric of a manifold (asymptotically flat spaces) that guarantees that the ADM-mass is  a geometric invariant. We  point out that, since then, it was used by a lot of authors and the literature about the subject is wide.

\begin{defi} 
\label{defi-finite-strong}
Let $M$ be a hypersurface of $\hyp^n\times\R$ and $A$ its shape operator. 
We define the quantity $||\,|A|\,||_{W^{1,q} _{-1}(M)}$ to be  the  {\em strong total curvature} of the immersion  $M$   and we say that the immersion has 
{\em finite strong total curvature} if

\begin{equation}
\label{FSTC-def}|A|\in W^{1,q} _{-1}(M),\;\;{\rm for\; some }\;  q>n,
\end{equation}
\end{defi}
where $|A|$ is the norm of the shape operator.

\

Notice that  the definition of strong total curvature does not depend on the choice of the point $p_0$ and  that \eqref{FSTC-def} can be written as follows:
\begin{equation*}
||\,|A|\,||_{W^{1,q} _{-1}(M)}=\left ( \int_M |A|^q \xi^{q-n} \;dM\right)^{1/q}+\left ( \int_M |\nabla |A||^q \xi^{2q-n} \;dM\right)^{1/q}<\infty,\;\;\mbox{for\; some}\;  q>n.
\end{equation*}

We point out that the norm $||\,|A|\,||_{W^{1,q} _{-1}(M)}$ is invariant by dilations of the intrinsic metric of $M.$ 

As in \cite{DE}, we will estimate the rate of the decay at infinity of  $|A|$ (see Proposition \ref{decay}). 

%
%
%


Next lemma is analogous to  \cite[Lemma 3.1]{DE}.

%
%
%
%
%

\begin{lem}
\label{compactness-lemma}
Let $B\subset \hyp^n\times\R$ be a bounded domain with smooth boundary $\partial B.$  
Let $\{W_i\}$ be a sequence of connected n-manifolds and let $X_i:W_i\to \hyp^n\times\R$ be hypersurfaces such that $ \partial X_i(W_i)\cap B=\emptyset$ and $ X_i(W_i)\cap B = M_i$ is connected and
nonempty. Assume that there exists a constant $C>0$ such that,  for every $i,$
${\displaystyle\sup_{x\in M_i}|A_i(x)|^2<C},$ where $A_i$ is the shape operator of $M_i,$  and that there exists a sequence of
points $\{x_i\}$, $x_i\in M_i$, with a limit point $x_0\in B$. Then:
\begin{itemize}

\item[\rm i)] A subsequence of $(M_i)$ converges $C^{1,\lambda}$ 
on the compact parts (see the definition below) to   a union of
hypersurfaces $M_\infty \subset B$, where $\lambda<1$.

\item[\rm ii)] If, in addition, $\left ( \int_{M_i} |A|^q \alpha_i \;dM_i\right)^{1/q}+\left ( \int_{M_i} |\nabla |A||^q \beta_i \;dM_i\right)^{1/q}\to 0,$  for sequences $\{\alpha_i\}$ and $\{\beta_i\}$ of continuous functions on $M_i$ satisfying ${\displaystyle\inf_{x\in
M_i}\{\alpha_i,\beta_i\}\geq \kappa}>0$, then a subsequence of $|A_i|$ converges to zero everywhere.
\end{itemize}
\end{lem}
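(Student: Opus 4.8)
The plan is to prove this via a standard local-graph compactness argument for hypersurfaces with uniformly bounded second fundamental form, of the type used by Anderson, White, and in \cite{DE}. First I would fix the limit point $x_0\in B$ and pass to a subsequence so that $x_i\to x_0$. Since $\sup_{M_i}|A_i|^2<C$ uniformly, the hypersurfaces $M_i$ have curvature bounded independently of $i$; by a standard estimate (controlling the second fundamental form controls how fast the tangent plane tilts), there is a radius $\delta>0$, depending only on $C$ and the geometry of $\hyp^n\times\R$ near $\bar B$, such that for each $i$ the connected component of $M_i\cap B_\delta(x_i)$ containing $x_i$ is a graph of a function $u_i$ over (an open subset of) the affine tangent space $T_{x_i}M_i$, with $|u_i|_{C^{1}}$ bounded by a universal constant. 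The uniform $|A_i|$ bound then promotes this to a uniform $C^{1,\lambda}$ bound on $u_i$ for every $\lambda<1$ (indeed the graphs satisfy a quasilinear elliptic system with the mean curvature vector, equivalently the trace of $A_i$, bounded, so Schauder-type interior estimates — or simply the elementary fact that a $W^{2,\infty}$ bound on $u_i$ gives $C^{1,\lambda}$ compactness — apply). By Arzel\`a--Ascoli, after a diagonal subsequence over an exhaustion of $B$ by compact sets and over a countable cover of $M_\infty$ by such graph charts, the $u_i$ converge in $C^{1,\lambda}$ on compact parts to a limit graph, and the union of these limit graphs is a $C^{1,\lambda}$ hypersurface $M_\infty\subset \bar B$; because each $M_i$ is connected and the $x_i$ converge into $B$, $M_\infty$ is nonempty (it contains $x_0$). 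This is statement (i); here "converges $C^{1,\lambda}$ on compact parts" means exactly that in each such graph chart over a compact subset, the defining functions converge in $C^{1,\lambda}$.

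For statement (ii), I would argue by contradiction. Suppose $|A_i|$ does not converge to zero everywhere along the subsequence obtained in (i); then, after passing to a further subsequence, there are points $y_i\in M_i$ with $y_i\to y_\infty\in \bar B$ and $|A_i(y_i)|\geq 2\varepsilon_0>0$ for some $\varepsilon_0>0$. Work in a fixed graph chart around $y_\infty$ as in part (i): on a fixed small ball $B_{\delta/2}(y_\infty)$ the component of $M_i$ through $y_i$ is the graph of $u_i$, and $|A_i|$ is a fixed smooth expression in $Du_i$ and $D^2u_i$ (with the ambient metric coefficients, which are smooth and $i$-independent near $y_\infty$). Since the norm $\bigl(\int_{M_i}|A|^q\alpha_i\,dM_i\bigr)^{1/q}\to0$ and $\alpha_i\geq\kappa>0$, we get $\int_{M_i\cap B_{\delta/2}(y_\infty)}|A_i|^q\,dM_i\to0$, hence $\|D^2u_i\|_{L^q}\to 0$ on the chart (the $C^1$ bound on $u_i$ makes the volume form and the relation between $|A_i|$ and $D^2u_i$ uniformly comparable, modulo lower-order $Du_i$ terms which are controlled because $D^2u_i$ being small in $L^q$ with $q>n$ forces, via Morrey, $Du_i$ to be uniformly H\"older and in fact convergent). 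Thus $u_i\to u_\infty$ in $W^{2,q}$, and by Morrey (as $q>n$) in $C^{1,1-n/q}$, with $D^2u_\infty=0$ a.e., so $u_\infty$ is affine and $M_\infty$ is totally geodesic near $y_\infty$, i.e. $|A_\infty|\equiv 0$ there. Now I invoke the scale invariance / convergence of the second fundamental form: the $W^{2,q}$ convergence $u_i\to u_\infty$ gives $|A_i|\to|A_\infty|=0$ in $L^q$ on the chart; combined with the gradient hypothesis $\bigl(\int_{M_i}|\nabla|A||^q\beta_i\,dM_i\bigr)^{1/q}\to0$ (so $|A_i|$ has $L^q$ gradient tending to zero on the chart) and again Morrey's inequality applied to $|A_i|$ itself, we conclude $|A_i|\to 0$ in $C^{0}$ on a neighborhood of $y_\infty$, contradicting $|A_i(y_i)|\geq 2\varepsilon_0$. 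Since $\bar B$ is compact this covers all of $M_\infty$, so $|A_i|\to0$ everywhere, proving (ii).

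The main obstacle I anticipate is making the graph-chart machinery genuinely uniform over $i$ in the ambient manifold $\hyp^n\times\R$ (rather than in $\R^{n+1}$ as in \cite{DE}): one must check that the local coordinates on $\bar B$ and the resulting constants in the graph-representation lemma, the interpolation/Morrey inequalities, and the comparison between $|A_i|$ and $D^2u_i$, depend only on the curvature bound $C$ and on fixed geometric data of $\hyp^n\times\R$ restricted to the compact set $\bar B$, and not on which $M_i$ or which point $x_i$ we center at. The other delicate point is bookkeeping in the diagonal argument for "convergence on compact parts": one fixes an exhaustion $K_1\subset K_2\subset\cdots$ of $B$ and, for each $\ell$, a finite subcover of $M_\infty\cap K_\ell$ by graph charts, then diagonalizes; I would state this once and not belabor it. Everything else — the $C^1\Rightarrow C^{1,\lambda}$ upgrade via the bounded trace of $A_i$, the $L^q$ smallness of $D^2u_i$, and the Morrey step turning $L^q$ control ($q>n$) of $|A_i|$ and $|\nabla|A_i||$ into $C^0$ decay — is routine elliptic theory exactly paralleling \cite[Lemma 3.1]{DE}.
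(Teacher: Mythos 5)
Your proposal is correct and takes essentially the same route as the paper, which disposes of the lemma in two lines: it uses \cite[Proposition 3.1]{ST} to treat the $M_i\cap B$ as Euclidean submanifolds with uniformly bounded second fundamental form and then invokes the proof of \cite[Lemma 3.1]{DE} --- precisely the local-graph/Arzel\`a--Ascoli argument you give for (i) and the Morrey embedding $W^{1,q}\hookrightarrow C^{0}$ (for $q>n$, with constants uniform in the fixed-radius graph charts) that you give for (ii). The uniformity issue you flag at the end is exactly what the citation of \cite[Proposition 3.1]{ST} is meant to handle, so nothing is missing.
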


By $C^{1,\lambda}$ convergence to $M_\infty$ on compact sets we mean that for any $m\in
M_\infty$ and each tangent plane $T_m M_\infty$ there exists  a ball
$B(m)$  of $\hyp^n\times\R$  around $m$ so that, for $i$ large, the image
by $X_i$ of some connected component of $X_i^{-1}(B(m)\bigcap M_i)$  is the graph  over a part of  $T_m M_\infty$ of  a function $g_i^m$ and the
sequence $g_i^m$ converges $C^{1,\lambda}$ to the function $g_\infty$, that defines 
$M_\infty$ as a graph over a neighbourhood of $m$ in the chosen plane $T_m M_\infty$.

\begin{proof} In i) and ii) we can work in  compact subsets $B$ of $\hyp^n\times\R$. By using  \cite[Proposition 3.1]{ST}, we can treat  $M_i\cap B$ as a sequence of submanifolds  of  $\R^{n+1}$ with  uniformly bounded  second fundamental form. Then we can use the proof of \cite[Lemma 3.1]{DE} in order to conclude our proof.

\end{proof}

For the proof of the following proposition we refer the reader to the proof of  \cite[Proposition 3.2]{DE}, with the following precautions.

\begin{enumerate}

\item All the rescales of the metric on the hypersurfaces  come from  a conformal changing on the metric of $\hyp^n\times\R$ by a constant conformal factor.
 
\item The convergences needed in the proof are guaranteed by Lemma \ref{compactness-lemma}.

\end{enumerate}

\begin{prop}\label{decay}
Let $M$ be a complete hypersurface in $\hyp^n\times \R$ with finite strong total curvature. Then, given $\varepsilon>0$ there
exists $R_0>0$ such that, for $R>R_0$,

\begin{equation}
R^2\sup_{x\in M-D_R(p)}|A|^2(x)<\varepsilon.
\end{equation}

where $D_R(p)$ is the intrinsic open $n$-ball of $M$  centered at a point $p\in M$ of radius $R.$

\end{prop}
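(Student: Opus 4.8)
The plan is to follow the blow‑up / contradiction scheme from \cite[Proposition 3.2]{DE}, adapted to the ambient space $\hyp^n\times\R$ by means of the compactness result in Lemma \ref{compactness-lemma}. Suppose the conclusion fails: then there exist $\varepsilon_0>0$ and a sequence of points $p_m\in M$ with $\xi(p_m)\to\infty$ and $\xi(p_m)^2\,|A|^2(p_m)\ge\varepsilon_0$. Since $|A|\in W^{1,q}_{-1}(M)$, the integrability of $|A|^q\xi^{q-n}$ and $|\nabla|A||^q\xi^{2q-n}$ over $M$ forces the tail contributions over the exterior regions $M\setminus D_{R}(p)$ to go to zero as $R\to\infty$; this is the quantitative ingredient that will eventually contradict the lower bound at the $p_m$'s.

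\textbf{Rescaling.} Around each $p_m$ I would rescale the induced metric by the factor $\xi(p_m)^{-2}$ (equivalently, perform a constant conformal dilation of the metric of $\hyp^n\times\R$, which is legitimate because a constant conformal factor turns $\hyp^n\times\R$ into another space of the same type and, crucially, does not change $H_r=0$‑type conditions — though here we have no curvature hypothesis, only the need that Lemma \ref{compactness-lemma} applies). In the rescaled metric the point $p_m$ has second fundamental form of norm bounded below by $\sqrt{\varepsilon_0}$, while the weighted‑Sobolev bound, being dilation invariant, persists with the \emph{same} constant; moreover on the annular region $\{\tfrac12\xi(p_m)\le\xi\le 2\xi(p_m)\}$ the rescaled second fundamental form is uniformly bounded (using Proposition \ref{decay}'s predecessor argument or a bootstrap: actually one first proves a uniform bound $\xi^2|A|^2\le C$ which is the weaker companion statement, then upgrades to the $\varepsilon$ version). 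Applying Lemma \ref{compactness-lemma}(i) to the rescaled hypersurfaces $M_m$ (with $B$ a fixed ball, $x_m=p_m$ playing the role of the marked points) yields $C^{1,\lambda}$ subconvergence on compact sets to a limit hypersurface $M_\infty$.

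\textbf{Conclusion via the vanishing of $|A|$.} On the limit, the hypothesis of Lemma \ref{compactness-lemma}(ii) is met: the rescaled integrals $\big(\int_{M_m}|A|^q\alpha_m\,dM_m\big)^{1/q}+\big(\int_{M_m}|\nabla|A||^q\beta_m\,dM_m\big)^{1/q}$, with $\alpha_m,\beta_m$ the rescaled weight functions (which stay bounded below by a positive constant $\kappa$ on the relevant compact sets because $\xi\sim\xi(p_m)$ there and the dilation cancels the $\xi(p_m)$ powers), tend to zero since they are tails of a convergent integral. Therefore $|A_m|\to 0$ everywhere on the limit — in particular at the marked point $p_m$, i.e.\ $\xi(p_m)^2|A|^2(p_m)\to 0$, contradicting $\xi(p_m)^2|A|^2(p_m)\ge\varepsilon_0$.

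\textbf{Main obstacle.} The delicate point is the uniform curvature bound on the rescaled annuli needed to invoke Lemma \ref{compactness-lemma} in the first place: a priori we only control weighted $L^q$ norms of $|A|$ and $\nabla|A|$, not a pointwise sup bound. The standard way around this — as in \cite{DE} — is a preliminary $\varepsilon$‑regularity / iteration argument showing $\sup\xi^2|A|^2<\infty$ globally (a "bounded" version of the present Proposition), using the Sobolev embedding $W^{1,q}\hookrightarrow C^{0,1-n/q}$ valid because $q>n$, together with a covering of the exterior region by intrinsic balls on which the normalized integrals are small. Once that uniform bound is in hand, the blow‑up above promotes it to the stated decay $\xi^2|A|^2\to 0$; keeping careful track of how the dilation invariance of the $W^{1,q}_{-1}$ norm interacts with the weights $\alpha_m,\beta_m$ is the other place where care is required, but it is routine given the setup in \cite{DE}.
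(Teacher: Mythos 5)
Your proposal follows essentially the same route as the paper: the paper gives no independent argument for Proposition~\ref{decay} but refers to the proof of \cite[Proposition 3.2]{DE} with exactly the two adaptations you describe, namely realizing the rescalings as constant conformal changes of the ambient metric of $\hyp^n\times\R$ and obtaining the required convergences from Lemma~\ref{compactness-lemma}. Your blow-up/contradiction outline --- dilation invariance of the $W^{1,q}_{-1}$ norm, vanishing of the tail integrals on the rescaled annuli, and Lemma~\ref{compactness-lemma}(ii) forcing $|A_m|\to 0$ at the marked points --- is a faithful reconstruction of that argument, at the same level of detail the paper itself provides.
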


\

 In Lemma \ref{identities} and Lemma \ref{proper-lemma} below, we explore the inclusion $\hyp^n\times \R\subset\R^{n+1}$ and we consider the canonical basis $\{e_1,\dots,e_n,e_{n+1}\}$ of $\R^{n+1}$ as a basis at each tangent plane of  $\hyp^n\times \R$.  Let $\Vert\; \Vert^2=\langle\;,\;\rangle$ and $\ov\nabla$ denote, respectively, the metric and the covariant derivative of $\hyp^n\times\R$. For a vector field $V(q)=\sum v_i(q)e_i+v_{n+1}(q)e_{n+1}$ in $\hyp^n\times\R$, where $q=\sum q_ie_i+q_{n+1}e_{n+1}\in \hyp^n\times \R$, we have

 \begin{equation*}
 \Vert V(q)\Vert^2=\frac{4\sum_{i=1}^nv_i^2(q)}{\left(1-\sum_{i=1}^nq_i^2\right)^2}+v^2_{n+1}(q),
  \ \ 
 \Vert q\Vert^2=\frac{4\sum_{i=1}^nq_i^2}{(1-\sum_{i=1}^nq_i^2)^2}+q^2_{n+1}.
 \end{equation*}
  
  Then, it is clear that: 
  
  \begin{equation}
  \label{limitada}
   \Vert q\Vert\to\infty\;\;  \mbox{iff}  \;\;q\to \partial_{\infty}(\hyp^n\times\R)
   \end{equation}

or, equivalently, that,  there exists $K_0>0$ such that  if $\Vert q\Vert<K_0,$ there exist $k_0>0,$ $t_0>0$  such that  ${\displaystyle\sum_{i=1}^n}q_i^2<k_0<1$  and 
$q_{n+1}^2<t_0.$

  In the next lemma, we consider the vector field  $X(p)=\sum x_i(p)e_i+x_{n+1}(p)e_{n+1}$, $p\in M$, given by the immersion  and establish  some elementary useful identities.
 
 \
 
%
%

\begin{lem}
 We have
 \begin{itemize}
 \item [(i)]$\ov\nabla_{e_j}X=Le_j$ and $ \ov\nabla_{e_{n+1}}X=e_{n+1}$, where  $L=(1+\displaystyle{\sum_{i=1}^{n}}x_i^2)(1-\displaystyle{\tiny{\sum_{i=1}^{n}}}x_i^2)^{-1}\geq 1$.
 \item [(ii)]$\ov\nabla_TX=\displaystyle{L\sum_{j=1}^n}t_je_j+t_{n+1}e_{n+1}$, where $T=\displaystyle{\sum_{i=1}^n} t_ie_i+t_{n+1}e_{n+1}$. 
 \end{itemize}
\label{identities}
\end{lem}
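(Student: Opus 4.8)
The plan is to prove both identities by a direct computation with the Levi-Civita connection $\ov\nabla$ of the product metric, treating the two factors separately. Since the $\R$-factor carries the flat metric $dt^2$, the vertical field $e_{n+1}$ is parallel and $x_{n+1}$ is affine along it; thus $\ov\nabla_{e_{n+1}}X=\ov\nabla_{e_{n+1}}\bigl(\sum_{i=1}^n x_ie_i+x_{n+1}e_{n+1}\bigr)=e_{n+1}$ is immediate, and $\ov\nabla_{e_j}(x_{n+1}e_{n+1})=0$ for $j\le n$. So everything reduces to the hyperbolic factor and the coordinates $x_1,\dots,x_n$.

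First I would record the Christoffel symbols of the conformally flat metric $g_\hyp=F^{-2}\delta$ on the ball. Writing $\varphi=-\log F$ so that $g_\hyp=e^{2\varphi}\delta$, one has the classical formula $\Gamma^k_{ij}=\delta_{ik}\partial_j\varphi+\delta_{jk}\partial_i\varphi-\delta_{ij}\partial_k\varphi$, with $\partial_j\varphi=-\partial_jF/F=x_j/F=\dfrac{2x_j}{1-|x|^2}$ and $|x|^2=\sum_{i=1}^n x_i^2$.

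Then, for $j\le n$, I would expand
\[
\ov\nabla_{e_j}X=\ov\nabla_{e_j}\Bigl(\sum_{i=1}^n x_ie_i\Bigr)=e_j+\sum_{i=1}^n x_i\,\ov\nabla_{e_j}e_i=e_j+\sum_{k=1}^n\Bigl(\sum_{i=1}^n x_i\Gamma^k_{ji}\Bigr)e_k,
\]
where the term $e_j$ comes from differentiating the coordinate functions. Substituting the expression for $\Gamma^k_{ji}$, the off-diagonal contributions cancel ($x_k\partial_j\varphi-x_j\partial_k\varphi=0$) and only $\delta_{jk}\sum_i x_i\partial_i\varphi=\delta_{jk}\dfrac{2|x|^2}{1-|x|^2}$ remains, so $\ov\nabla_{e_j}X=\bigl(1+\tfrac{2|x|^2}{1-|x|^2}\bigr)e_j=\dfrac{1+|x|^2}{1-|x|^2}e_j=Le_j$, with $L\ge 1$ since $|x|^2<1$. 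This gives (i). Part (ii) is then immediate from the $C^\infty$-linearity of $\ov\nabla$ in the direction argument: $\ov\nabla_TX=\sum_{i=1}^n t_i\ov\nabla_{e_i}X+t_{n+1}\ov\nabla_{e_{n+1}}X=L\sum_{i=1}^n t_ie_i+t_{n+1}e_{n+1}$.

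I do not expect a genuine obstacle here; the computation is elementary once the conformal Christoffel symbols are in hand. The only points requiring care are bookkeeping ones: keeping the ranges $1,\dots,n$ and $1,\dots,n+1$ distinct, reading $X$ as the ambient position vector field of $\R^{n+1}$ restricted to $\hyp^n\times\R$ (so that the $e_j$ term in $\ov\nabla_{e_j}X$ is exactly the derivative of the coordinate functions), and noticing the cancellation of the antisymmetric conformal terms, which is what makes the coefficient collapse to the single scalar $L$.
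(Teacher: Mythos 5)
Your proof is correct and takes essentially the same route as the paper's: both compute the Christoffel symbols of the conformally flat metric $F^{-2}\delta$ on the ball factor (you via the general conformal-change formula, the paper by listing the nonzero symbols case by case), observe that the antisymmetric cross terms cancel so that only the scalar $L$ survives, and obtain (ii) by linearity of $\ov\nabla$ in the direction argument.
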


\begin{proof}

We first recall that the coefficients of the metric in $\hyp^n\times\R$ are given by $g_{ij}=\frac{\delta_{ij}}{F^2},$   $g_{n+1,n+1}=1,$ 
$g_{i,n+1}=0,$ where $i,j=1,\dots,n$ and $F=\frac{1}{2}(1-\sum_{i=1}^nx_i^2)$. Then a straightforward computation gives that the Christoffel symbols satisfy $\Gamma_{ij}^k=0$ if $i,j,k$ are all distinct or if at least one of the indices is $n+1.$  Moreover 

\begin{equation*}
\Gamma_{ij}^i=\frac{x_j}{F},\ \Gamma_{ii}^j=-\frac{x_j}{F}, \ \Gamma_{ij}^j=\frac{x_i}{F},\ \Gamma_{ii}^i=\frac{x_i}{F}, \mbox{ with } i,j=1,\dots,n. 
\end{equation*}

 Then, for $j\leq n$ we have $\ov\nabla_{e_j}X=e_j+\sum_{i=1}^n x_i\ov\nabla_{e_j}e_i$  and

 \begin{align} 
 \sum_{i=1}^n x_i\ov\nabla_{e_j}e_i&= \sum_{i=1}^n x_i\sum_{k=1}^{n+1}\Gamma_{ji}^ke_k\notag\\
 &=\sum_{i\not=j}^nx_i\sum_{k=1}^{n}\Gamma_{ji}^ke_k+x_j\sum_{k=1}^{n}\Gamma_{jj}^ke_k\notag\\
 &=\sum_{i\not=j}^nx_i[\Gamma_{ji}^je_j+\Gamma_{ji}^ie_i]+x_j[\Gamma_{jj}^je_j+\sum_{i\not=j}\Gamma_{jj}^ie_i]\notag\\
 &=\sum_{i\not=j}^n\frac{x_i^2}{F}e_j+ \sum_{i\not=j}^n\frac{x_ix_j}{F}e_i+\frac{x_j^2}{F}e_j+\sum_{i\not=j}^n-\frac{x_ix_j}{F}e_i\notag\\
 &=\sum_{i=1}^n\frac{x_i^2}{F}e_j.
 \end{align}

 Summing up, we obtain

 \begin{equation*}
 \ov\nabla_{e_j}X=e_j+\sum_{i=1}^n\frac{x_i^2}{F}e_j=\frac{1+\sum_{i=1}^{n}x_i^2}{1-\sum_{i=1}^{n}x_i^2}e_j=Le_j.
  \end{equation*}

The equality  $\ov\nabla_{e_{n+1}}X=e_{n+1}$   is straightforward and finishes the proof of {\em (i)}. 

The proof of  {\em(ii)}, is straightforward

\begin{equation*}
\ov\nabla_TX=\sum_{i=1}^nt_j\ov\nabla_{e_j}X+t_{n+1}\ov\nabla_{e_{n+1}}X=L\sum_{j=1}^n t_je_j+t_{n+1}e_{n+1}.
\end{equation*}

\end{proof}

\

 In Lemma \ref{proper-lemma},  we  generalise \cite[Lemma 3.1]{DE} .

\begin{lem}\label{proper-lemma}
Let $X: M^n\to\hyp^{n}\times\R$ be a complete  isometric immersion  with finite   strong total curvature.  Then $X$  is proper 
 and the extrinsic distance has no critical points outside 
 a ball of  $\hyp^{n}\times\R$.

\end{lem}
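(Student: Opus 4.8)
The strategy is to establish properness by studying the behaviour of the extrinsic distance function along the immersion and combining it with the curvature decay from Proposition \ref{decay}. Fix the origin $\sigma$ of $\hyp^n$ and consider the function $\varphi:M\to\R$, $\varphi(p)=\Vert X(p)\Vert^2$, where $\Vert\ \Vert$ is the Euclidean norm coming from the inclusion $\hyp^n\times\R\subset\R^{n+1}$. By Lemma \ref{identities}, the ambient gradient of $\Vert X\Vert^2$ decomposes in a controlled way along tangent and normal directions, so one can compute the intrinsic gradient and Hessian of $\varphi$ on $M$: the gradient term involves $L\geq 1$ times the tangential part of $X$, while the Hessian picks up a zeroth-order positive contribution (coming from $\ov\nabla_{e_j}X=Le_j$ and $\ov\nabla_{e_{n+1}}X=e_{n+1}$) plus a term of the form $\langle X^\perp, A(\cdot),(\cdot)\rangle$ involving the shape operator.

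The first main step is to show that outside a sufficiently large intrinsic ball $D_{R_0}(p_0)$, the function $\varphi$ has no critical points. At a critical point of $\varphi$ we would have $X^\top=0$, i.e. $X$ is normal to $M$ there; but then $\mathrm{Hess}\,\varphi$ at that point reduces to the zeroth-order part plus $\Vert X\Vert\,|\langle N, \text{something}\rangle|\cdot|A|$, and by Proposition \ref{decay} we have $|A|(x)=o(1/\xi(x))$, so for $\xi$ large the zeroth-order positive term dominates and $\mathrm{Hess}\,\varphi$ is positive definite. Thus any critical point outside $D_{R_0}(p_0)$ would be a strict local minimum of $\varphi$, which is impossible for a proper exhaustion-type function unless $\varphi$ is bounded on that end — and boundedness of $\varphi=\Vert X\Vert^2$ on an end, via \eqref{limitada}, would force the end to stay in a compact part of $\hyp^n\times\R$, which combined with completeness and the curvature bound (and Lemma \ref{compactness-lemma}) is ruled out. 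This simultaneously gives "no critical points of the extrinsic distance outside a ball."

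The second step is to upgrade "no critical points at infinity" to properness. Here the idea is the standard one: once $\varphi$ has no critical points outside $D_{R_0}(p_0)$, its level sets foliate each end, and $\varphi$ is monotone along the gradient flow on each end; hence $\varphi(p)\to\infty$ as $p\to\infty$ in $M$ (along any divergent sequence), because a bounded end is excluded as above. By \eqref{limitada}, $\Vert X(p)\Vert\to\infty$ is equivalent to $X(p)\to\partial_\infty(\hyp^n\times\R)$, so preimages of compact sets of $\hyp^n\times\R$ are bounded in $M$; since $M$ is complete, bounded closed subsets are compact, and properness follows.

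\textbf{Main obstacle.} The delicate point is the Hessian estimate at near-critical points of $\varphi$: one must check that the positive zeroth-order term genuinely dominates the shape-operator term uniformly on the end, which requires matching the weight $\xi^{-1}$ decay of $|A|$ from Proposition \ref{decay} against the size of $\Vert X^\perp\Vert$, and controlling the conformal factor $L$ (which blows up precisely as one approaches $\partial_\infty\hyp^n$). Keeping this estimate genuinely scale-invariant — so that the $o(1/\xi)$ decay is exactly what is needed — is where the argument must be run carefully, and this is the step where one leans most heavily on the precise form of the weighted Sobolev hypothesis and on Lemma \ref{compactness-lemma} to exclude a bounded-in-$\hyp^n\times\R$ end.
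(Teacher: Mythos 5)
There is a genuine gap, and it sits exactly where you flag your ``main obstacle'' without resolving it. Your plan runs a \emph{direct} argument: show $\mathrm{Hess}\,\varphi$ (or the derivative of the extrinsic distance) is positive outside a large intrinsic ball by comparing the zeroth-order term against $\Vert X\Vert\,|A|$. But Proposition \ref{decay} only gives $|A|=o(1/\xi)$ in terms of the \emph{intrinsic} distance $\xi$, while $\Vert X\Vert$ is measured in the product metric and grows like $\tfrac{2}{1-|x|^2}$ as the point approaches $\partial_\infty\hyp^n$ --- i.e.\ exponentially in the intrinsic distance. So the product $\Vert X\Vert\,|A|$ is \emph{not} small in general, and the positivity of the Hessian cannot be established unconditionally. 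The paper escapes this precisely by arguing by contradiction: assuming non-properness produces a ray $\gamma(s)$ along which $Q(s)=\Vert X(s)\Vert$ stays bounded, which by \eqref{limitada} confines $\gamma$ to a region where $\sum x_i^2<k_0<1$; only then does one get $\Vert X\Vert\le \tfrac{2}{1-k_0}|X|_{\R^{n+1}}\le \tfrac{2}{1-k_0}\,s$, so that $\Vert X\Vert\,|A|<\sqrt{\varepsilon}$ and $T\langle X,T\rangle>1-\tfrac1m$; integrating forces $\langle X,T\rangle\to\infty$, contradicting boundedness of $Q$. Without that contradiction hypothesis your estimate does not close.

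Two further logical steps in your plan do not hold as stated. First, ``a critical point of $\varphi$ outside $D_{R_0}(p_0)$ with positive definite Hessian is a strict local minimum, which is impossible'' is a non sequitur: proper immersions routinely have strict local minima of $\Vert X\Vert^2$ far from the base point (a small sphere far from the origin). The correct mechanism is monotonicity of $\langle X,T\rangle$ along minimizing geodesics from $p_0$, not Morse-theoretic exclusion of minima. Second, ``no critical points on an end $\Rightarrow$ $\varphi\to\infty$ on that end'' also fails without a quantitative lower bound on the derivative: $\varphi$ could increase to a finite limit along the gradient flow, and your appeal to Lemma \ref{compactness-lemma} to rule out a bounded end is not an argument --- excluding an end that accumulates in a compact part of $\hyp^n\times\R$ is exactly the content of properness, which is what is being proved. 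For the record, the paper obtains the second assertion of the lemma (no critical points of the extrinsic distance outside a ball) not from a Hessian computation but by observing that Proposition \ref{decay} makes the second fundamental form \emph{tamed} in the sense of Bessa--Costa and invoking their theorem in the Hadamard manifold $\hyp^n\times\R$.
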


\begin{proof}

If the immersion is not proper, we can find a ray $\gamma(s)$ issuing
from the origin $(\sigma, 0)$, parametrized by the arc length $s$,  such that, as $s$
goes to infinity, $Q(s)$ is bounded (see the discussion before Lemma \ref{identities}), where $X(s)=X(\gamma(s))$ and   
$Q(s)=\Vert X(s)\Vert$.  Setting $T(s)=\gamma'(s)$, we have  

\begin{equation}
Q(s)\geq \langle X(s), T(s)\rangle.
\end{equation}

In order to estimate $\langle X,T\rangle$  from below, we start by estimating  

\begin{equation}
T\langle X,T\rangle=\langle\ov{\nabla}_TX,T\rangle+\langle X,\ov{\nabla}_T T\rangle.
\label{conta18}
\end{equation}

Let us estimate the first term in the right hand side of (\ref{conta18}). By using Lemma \ref{identities} and that $\Vert T(s)\Vert=1$ we have

\begin{equation}
\langle \ov{\nabla}_TX, T\rangle=t_{n+1}^2(1-L)+L
\geq 1-L+L=1,
\label{ineq-proper0}
\end{equation}

where we used that $t_{n+1}^2\leq 1$ and that   $L\geq 1.$ 

\

Now we estimate the second term in the right hand side of (\ref{conta18}). We first notice that, since $\gamma$ is a geodesic in $M$, the tangent component of
$\ov{\nabla}_TT$ vanishes and we have

\begin{equation}
\ov{\nabla}_TT=\langle\ov{\nabla}_TT,N\rangle
N=-\langle\ov{\nabla}_TN,T\rangle N=\langle A(T),T\rangle N.
\end{equation}

It follows, by Cauchy-Schwarz inequality, that

\begin{equation}
\label{ineq-proper01}
|\langle X,\ov{\nabla}_TT\rangle|\le\Vert X\Vert\,\Vert A(T)\Vert\,\Vert T\Vert\le \Vert X\Vert\,|A|.
\end{equation}

In view of \eqref{limitada}, since we are assuming that $Q(s)$ is bounded, there exist $k_0$ such that ${\displaystyle\sum_{i=1}^nx_i^2}<k_0<1$, then we obtain 

\begin{equation}
\label{ineq-proper01.5}
 \Vert X\Vert^2< \left(\frac{2}{1-k_0}\right)^2\sum_{i=1}^nx_i^2+x^2_{n+1}< \left(\frac{2}{1-k_0}\right)^2\left( \sum_{i=1}^nx_i^2+x^2_{n+1}\right)=\left(\frac{2}{1-k_0}\right)^2|X|^2_{\tiny \R^{n+1}}.
\end{equation}

hence, by replacing \eqref{ineq-proper0},\eqref{ineq-proper01} and \eqref{ineq-proper01.5} in  \eqref{conta18}, we get

\begin{equation}
\label{ineq-proper1}
T\langle X,T\rangle\ge1-\frac{2}{1-k_0}| X|_{\tiny \R^{n+1}}\,|A|.
\end{equation}

 Now, we notice that $\Vert V\Vert > |V|_{\tiny \R^{n+1}}$, for all vector field $V$ tangent to $\hyp^n\times\R$.  In fact, $$
 \Vert V(q)\Vert^2=\frac{4\sum_{i=1}^nv_i^2(q)}{\left(1-\sum_{i=1}^nq_i^2\right)^2}+v^2_{n+1}(q)\geq \sum_{i=1}^nv_i^2(q)+v^2_{n+1}(q)=|V|^2_{\tiny \R^{n+1}}
  $$
 This implies, since $\gamma$ is a minimizing geodesic, that 
 $$s=\mbox{distance}_M ((X(s),(\sigma,0))>\mbox{distance}_{\;\R^{n+1}}(X(s),(\sigma,0))=| X|_{\tiny \R^{n+1}}$$
 
 which together with \eqref{ineq-proper1} gives
 
 $$
 T\langle X,T\rangle> 1-\frac{2}{1-k_0}s\,|A|.
 $$
 
  By using Proposition \ref{decay} with $\ve=\left(\frac{1-k_0}{2m}\right)^2$ we obtain

\begin{equation}\label{ineq-proper2}
T\langle X,T\rangle(s)> 1-\frac1m,
\end{equation}

for all $s>R_0$, where $R_0$ is given by Proposition
\ref{decay}. Integration of \eqref{ineq-proper2} from $R_0$ to $s$
gives
\begin{equation}\label{ineq-proper3}
\langle X,T\rangle(s)>\left(1-\frac1m\right)(s-R_0)+\langle
X,T\rangle(R_0).
\end{equation}

Since  $Q(s) = \Vert X(s)\Vert \geq \lgg X,T\rg(s)$, we see from  \eqref{ineq-proper3} that
$Q$ goes to infinity with $s$. This is a contradiction and proves
that $M$ is properly immersed.

\

Now we use that  $\hyp^n\times\R$ is a Hadamard manifold and we notice that Proposition \ref{decay} implies that $X: M^n\to\hyp^{n}\times\R$ has tamed second fundamental form (see \cite[Definition 1.1]{BC}). Then can use (the proof of) \cite[Theorem 1.2]{BC}  to conclude  there exists 
 a ball of  $\hyp^{n}\times\R$, centered at the origin, of radius $r_0$ such that the extrinsic  distance  has no critical points outside 
this ball.

\end{proof}

\begin{rem}
The technique of the proof of \cite[Theorem 1.2]{BC}  can also be used to get an alternative  proof of the fact  that $X$ is properly immersed.
\end{rem}

Let  $X:M\to\hyp^n\times\R$ be a hypersurface with  finite strong total curvature. By Lemma \ref{proper-lemma} there exists $r_0>0$ such that  
the distance function in $\hyp^n\times\R$ has no critical points in
$W=X(M)-(B_{r_0}(p_0)\cap X(M))$, where $B_{r_0}(p_0)$ is an extrinsic ${\rm (n+1)}$-ball of radius $r_0$ in $\hyp^n\times \R$. By Morse Theory, $X^{-1}(W)$  is
homeomorphic to $X^{-1}[X(M)\cap S_{r_0}(p_0)]\times[0,\infty)$, where $S_{r_0}(p_0)=\partial B_{r_0}(p_0)$.

\

 An {\em end}   $E$ of $M$ is a connected component of $X^{-1}(W)$. It follows that $M$ has only a finite number of
ends. In what follows, we identity $E$ and $X(E)$.

\

 With the same proof of \cite[Lemma (4.2)]{DE}, we can conclude that, for  $r>r_0$,  $E\cap B_{r_0}(p_0)$ is connected for each end $E$.
 
 Theorem \ref{strong-normal} below is a fundamental result for the characterization of finite strong total curvature hypersurfaces. We notice that it requires no assumption on $H_r$ and that it generalizes part of \cite[Theorem 1.1]{DE}.

 \begin{thm}
 \label{strong-normal}
 Let $X:M\to\h^n\times\R$, $n\geq 3$, be an orientable complete hypersurface    finite strong total curvature.  Then:

\begin{itemize}\itemsep=-1pt
\item[{\rm (i)}] The immersion $X$ is proper.

 \item[{\rm (ii)}] $M$ is diffeomorphic to a compact manifold $\ov M$ minus a finite
number of points $q_1, \dots q_k$.

\end{itemize}
 
 \end{thm}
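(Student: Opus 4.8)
The statement has already been reduced to a purely topological claim, since part (i) --- properness --- is exactly Lemma \ref{proper-lemma}, so the work is entirely in part (ii). The plan is to exploit the structure that properness and finite strong total curvature have already produced: by Lemma \ref{proper-lemma} there is $r_0>0$ so that the extrinsic distance function $q\mapsto \Vert q\Vert$ has no critical points on $W=X(M)\setminus (B_{r_0}(p_0)\cap X(M))$. First I would invoke Morse theory on the restriction of this distance function to $M$ (as recalled in the paragraph just before Theorem \ref{strong-normal}) to conclude that $X^{-1}(W)$ is homeomorphic to $X^{-1}(X(M)\cap S_{r_0}(p_0))\times[0,\infty)$, and hence that $M$ has only finitely many ends $E_1,\dots,E_k$, each of which is topologically a product $N_i\times[0,\infty)$ with $N_i$ a closed $(n-1)$-manifold (a component of the extrinsic level set). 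The compact core $M\setminus \bigcup_i E_i$ is a compact manifold with boundary $\bigsqcup_i N_i$.

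The heart of the matter, then, is to show that each end $E_i$ is in fact diffeomorphic to a \emph{punctured disk} $\mathbb{S}^{n-1}\times[0,\infty)$ (equivalently, that $N_i$ is a sphere and the end is standard), so that each end can be compactified by adding a single point $q_i$. Here I would follow the strategy of \cite[Theorem 1.1]{DE}: use the curvature decay from Proposition \ref{decay}, which gives $R^2\sup_{M\setminus D_R(p)}|A|^2<\varepsilon$, to show that, after intrinsic rescaling, the end converges (in the $C^{1,\lambda}$ sense of Lemma \ref{compactness-lemma}) to a limit that is totally geodesic --- in $\hyp^n\times\R$ a totally geodesic hypersurface through infinity is (a piece of) a slice $\hyp^n\times\{t\}$ or a vertical hyperplane $\Pi\times\R$, both of which are intrinsically flat/hyperbolic and, more to the point, have the topology of $\R^n$ near infinity. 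Concretely: Lemma \ref{compactness-lemma}(ii), applied with the weights $\alpha_i,\beta_i$ coming from the weighted Sobolev norm along a sequence of rescalings $\lambda_i\to\infty$, forces $|A_i|\to 0$, so the rescaled ends subconverge to a complete minimal (indeed totally geodesic) hypersurface; identifying its topology and transferring back via the $C^{1,\lambda}$ graph description shows the annular end $E_i\cap(B_{2R}\setminus B_R)$ is, for $R$ large, a graph over an annular piece of a fixed totally geodesic model, hence a topological product $\mathbb{S}^{n-1}\times(R,2R)$. Chaining these together (a standard ``ends are eventually cylinders'' argument) yields $E_i\cong \mathbb{S}^{n-1}\times[0,\infty)$.

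Finally I would assemble the pieces: glue a disk $D^n$ to each boundary sphere $N_i\cong \mathbb{S}^{n-1}$ of the compact core to obtain a closed manifold $\bar M$, with the centers of the glued disks being the punctures $q_1,\dots,q_k$; then $M\cong \bar M\setminus\{q_1,\dots,q_k\}$, which is exactly (ii). Smoothness of $\bar M$ is routine once the product structure of the ends is known. The main obstacle I anticipate is the rescaling/convergence step that pins down the topology of each end: one must verify that the hypotheses of Lemma \ref{compactness-lemma} (uniformly bounded second fundamental form after rescaling, non-collapsing, the vanishing of the weighted integrals along the rescaled sequence) genuinely hold here --- the weight exponents in Definition \ref{defi-finite-strong} are calibrated precisely so that the relevant integrals are scale-invariant or decaying, and checking this carefully, together with ruling out that the limit is a nontrivial cone or has several sheets clustering, is where the real content of the argument lies. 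This is exactly the point where \cite{DE} does the corresponding work for $\R^{n+1}$, and the embedding $\hyp^n\times\R\subset\R^{n+1}$ together with \cite[Proposition 3.1]{ST} lets that machinery run with only cosmetic changes.
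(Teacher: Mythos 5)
Your reduction of the problem --- (i) is Lemma \ref{proper-lemma}, Morse theory on the extrinsic distance gives finitely many ends $E_i\cong N_i\times[0,\infty)$, and the remaining issue is the topology of each end --- matches the paper up to that point. But from there the paper finishes (ii) by a completely different and much shorter device: it applies to each end the inversion $I(x)=x/\Vert x\Vert^{2}$ of $(\hyp^n\times\R)\setminus\{(\sigma,0)\}$, with $\Vert\cdot\Vert$ the product norm in the ball-model coordinates; then $I(E_i)$ lies in a punctured ball and accumulates only at the origin as $\Vert x\Vert\to\infty$, so each end is compactified by a single point $q_i$, exactly as in Osserman and in \cite{DE} for $\R^{n+1}$. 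No blow-down and no identification of the cross-sections $N_i$ with spheres is attempted in the paper.

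The step of yours that would fail is the blow-down. In $\R^{n+1}$ the tangent-cone-at-infinity rescaling works because the ambient space is scale-invariant; $\hyp^n\times\R$ is not. The paper's own caveat before Proposition \ref{decay} states that all rescalings are constant conformal changes of the ambient metric, and under $\lambda^{2}g_{\hyp^n\times\R}$ with $\lambda\to0$ the hyperbolic factor has curvature $-\lambda^{-2}\to-\infty$: there is no smooth ambient limit in which your rescaled annuli could converge to a totally geodesic hypersurface, and Lemma \ref{compactness-lemma} is stated only for a fixed bounded domain $B$ of $\hyp^n\times\R$, not for a degenerating sequence of ambient metrics. (An \emph{intrinsic} rescaling of the induced metric, which is what you actually invoke, does not even produce a hypersurface of $\hyp^n\times\R$ to pass to the limit with.) Moreover the conclusion you want --- that for large $R$ the annulus $E_i\cap(B_{2R}\setminus B_R)$ is a graph over one \emph{fixed} totally geodesic model with sphere cross-sections --- is essentially the content of Theorem \ref{strong-normal-minimal}, which the paper can only prove under the extra hypotheses $H_r=0$ together with control of $\partial_\infty E$ by an admissible collection of hyperplanes; it is not available in the generality of Theorem \ref{strong-normal}, where nothing is assumed about $H_r$ or the asymptotic boundary. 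Your underlying worry is legitimate: for the one-point compactification to be a manifold one does need to know something like $N_i\cong \mathbb{S}^{n-1}$, and the paper's four-line proof leans on \cite{DE} for this; but the way that is settled is by studying the inverted end $I(E_i)$ near the origin, where the decay of $|A|$ from Proposition \ref{decay} controls the geometry across the puncture, not by a blow-down at infinity.
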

 
 \begin{proof}

(i) has already been proved in Lemma \ref{proper-lemma}.  To prove (ii), we apply to each end $E_i$ the restriction of the  ambient transformation $I\colon ({\hyp}^n\times\R)-\{(\sigma,0)\}\to({\hyp}^n\times\R)-\{(\sigma,0)\}$, defined by $I(x)=x/\Vert x\Vert^2$, where the norm is with respect to the metric in ${\hyp}^n\times\R.$
 Then  $I(E_i)\subset B_1((\sigma,0))-\{(\sigma,0)\}$ and as $\Vert x\Vert \to\infty$ in $E_i$, $I(x)$ converges to the origin $(\sigma,0)$. It follows that each $E_i$ can be compactified with a point $q_i$. Doing this for each $E_i$, we obtain a compact manifold $\overline M$ such that $\ov M-\{q_1,\dots,q_k\}$ is diffeomorphic to $M$. This prove (ii).

 \end{proof}

\section{Finite strong total curvature and $H_r=0$}
\label{main-result}

 In the next theorem, we deal with an immersion $X:M\to\h^n\times\R$ with finite strong total curvature and $H_r=0$.   The proof  is inspired by the proof of  \cite[Theorem 2.1]{ST}, although the assumptions and the result are different in nature.

Let $\Pi_1,\dots\Pi_k$  be an admissible collection of hyperplanes of  $\hyp^n$, $P_i$, $i=1,\dots,k,$ the corresponding vertical hyperplanes and let $C^i_{\rho}$ the $\rho$-cylinder associated to  $\Pi_i,$ $i=1,\dots,k,$  as defined at the end of Section 1. 
We say that $M$  is {\em asymptotically close} to $(P_1\cup\dots\cup P_k)\times\R$ if for any $\rho,$ there is a compact  subset  $K_{\rho}$ of $M$ such that 

\begin{equation}
X(M\setminus K_{\rho})\subset \cup_{i=1}^kC^i_{\rho}.
\end{equation}

We   notice that, there are different notions of closeness at infinity and convergence in \cite{HNST, MMR, ST, ST1}. 

\

\begin{thm} 
\label{strong-normal-minimal}
Assume that $X:M\to\h^n\times\R$ has finite strong total curvature and satisfies $H_r=0$. Let $E$  be an end of $X(M)$ and let  $N=(N_1,\dots,N_{n+1})$ be  
a  unit normal vector field on $X(E).$ Let $\Pi_1,\dots\Pi_k$  be an admissible collection of hyperplanes of  $\hyp^n$ and $P_i$, $i=1,\dots,k,$ the corresponding vertical hyperplanes, such that $\partial E\subset \overline{P(\Pi_1,\dots,\Pi_k)}$. Suppose that
$\partial_{\infty}E\cap (\partial_{\infty}\hyp^n\times\R)\subset \partial_{\infty }(P_1\cup\dots\cup P_k)$.
  Then:

\begin{itemize}
 \item[{\rm (i)}] $E$ is asymptotically close  to $P_1\cup\dots\cup P_k.$

 \item[{\rm (ii)}]   For any  sequence  of points $\{p_m\}\subset E$  converging  to a point in $\partial_{\infty}E,$ 
 the sequence $\{N_{n+1}(p_m)\}$ converges uniformly to zero.
\end{itemize}

 \end{thm}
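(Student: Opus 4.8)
The plan is to run, on the end $E$, the same geometric maximum-principle scheme that was used in Section~\ref{asymptotic-section}, but localized to a single end and combined with the decay estimate of Proposition~\ref{decay}. First I would establish (i). The end $E$ is properly immersed by Lemma~\ref{proper-lemma}, it has $H_r=0$, its finite boundary $\partial E$ lies in $\overline{P(\Pi_1,\dots,\Pi_k)}$, and by hypothesis its asymptotic boundary inside $\partial_\infty\hyp^n\times\R$ lies in $\partial_\infty(P_1\cup\dots\cup P_k)$. Hence Corollary~\ref{halfspace-coro} applies directly and gives $E\subset\overline{P(\Pi_1,\dots,\Pi_k)}$. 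To upgrade this containment to asymptotic closeness I would run the barrier argument of Theorem~\ref{iperplano} with the sharper family ${\mathcal M}^r_d$ of Theorem~\ref{classification-theorem}: fix $\rho>0$ and take $d=d(\rho)$ so that $Z_{\rho_d}^\pm$ is pinched into the $\rho$-neighbourhood region, i.e. so that the complement of $\cup_i C^i_\rho$ is covered by finitely many translates of $Z_{\rho_d}^{\pm}$ associated with the $\Pi_i$. Sweeping each such barrier ${\mathcal M}^r_d(s)$ in from infinity along the geodesic orthogonal to $\Pi_i$ and invoking the Maximum Principle (legitimate since each ${\mathcal M}^r_d$, $d>1$, satisfies the $H_j'\ge 0$ hypotheses by Proposition~\ref{sinal}, and $H_{r+1}\neq0$ there) forces $E$ to stay on the far side of every such region outside a compact set. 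Since there are finitely many ends and finitely many hyperplanes, the exceptional set is compact; calling it $K_\rho$ gives $X(E\setminus K_\rho)\subset\cup_i C^i_\rho$, which is exactly (i).

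For (ii) I would argue by contradiction using the compactness Lemma~\ref{compactness-lemma}(ii). Suppose $\{p_m\}\subset E$ converges to a point $p_\infty\in\partial_\infty E$ but $|N_{n+1}(p_m)|\ge\delta>0$ along a subsequence. Apply the inversion-type ambient transformation $I(x)=x/\Vert x\Vert^2$ used in the proof of Theorem~\ref{strong-normal}, or more conveniently translate the picture so that the $p_m$ are brought into a fixed compact ball $B$ of $\hyp^n\times\R$ by the hyperbolic translations (slice-wise, $t$-preserving) that the problem is set up to exploit: because $E$ is asymptotically close to the vertical hyperplanes $P_i$ by part (i), a point near $\partial_\infty E$ is, after a suitable ambient isometry $\Phi$, close to one of the $P_i$, and we may assume the translated pieces $M_m:=\Phi_m(E)\cap B$ are nonempty, connected, with $p_m$ mapped to a fixed-radius ball around a limit point $x_0\in B$. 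The rescaling/decay estimate Proposition~\ref{decay} (equivalently, the finite-strong-total-curvature tail bounds for $|A|$ and $|\nabla|A||$) gives the uniform bound $\sup|A_m|^2<C$ and, crucially, makes the weighted integrals $\int_{M_m}|A|^q\alpha_m+\int_{M_m}|\nabla|A||^q\beta_m\to0$ for weights $\alpha_m,\beta_m$ bounded below by a positive constant on $B$ (this is where properness and the $\xi$-weight matter: far out, $\xi\to\infty$ on $E$). Lemma~\ref{compactness-lemma}(ii) then forces a subsequence of $|A_m|$ to converge to zero everywhere on the limit, so the $C^{1,\lambda}$ limit $M_\infty$ is a totally geodesic hypersurface of $\hyp^n\times\R$; being also a limit of pieces of $E$, which are squeezed into the region between parallel copies of a fixed vertical hyperplane $P_i$ (part (i)), $M_\infty$ must be a piece of that vertical hyperplane $P_i=\Pi_i\times\R$. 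But for a vertical hyperplane the unit normal has $N_{n+1}\equiv0$, and $C^1$ convergence gives $N_{n+1}(p_m)=(N_m)_{n+1}\to 0$, contradicting $|N_{n+1}(p_m)|\ge\delta$. The uniformity of the convergence to zero follows because the argument applies to \emph{every} sequence leaving through $\partial_\infty E$: if convergence were not uniform we could extract a bad sequence and rerun the above.

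The main obstacle I anticipate is the second step of (i): organizing the barrier sweep so that a \emph{single} compact set $K_\rho$ works simultaneously for all the asymptotic directions of $E$. Near a vertical line where two of the $\partial_\infty P_i$ meet, the geometry of the admissible collection must be used carefully (the remark after Definition~\ref{halfspace-def} that $\partial_\infty P_i\cap\partial_\infty P_j$ is at most a vertical line) to guarantee that the finitely many barrier families, one per hyperplane, can be translated independently without obstructing each other, and that their union confines $E$ into $\cup_i C^i_\rho$ outside a compact region rather than merely into $\overline{P(\Pi_1,\dots,\Pi_k)}$. A secondary technical point is verifying the hypotheses of Lemma~\ref{compactness-lemma}(ii) in the translated frames: one must check that the weights coming from the $\xi^{q-n}$ and $\xi^{2q-n}$ factors in the strong-total-curvature norm, after the ambient normalization bringing the $p_m$ into $B$, are uniformly bounded below on $B$ by a positive constant — which is exactly the content of the remark preceding Proposition~\ref{decay} that the norm $\Vert\,|A|\,\Vert_{W^{1,q}_{-1}(M)}$ is dilation-invariant, so no curvature is lost in the rescaling.
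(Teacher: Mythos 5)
Your first step (Corollary~\ref{halfspace-coro} gives $E\subset\overline{P(\Pi_1,\dots,\Pi_k)}$) matches the paper, and your overall reliance on the barriers ${\mathcal M}^r_d$ is the right instinct. But part (i) as you describe it has a genuine gap. The half-space sweep of Theorem~\ref{iperplano} only ever excludes $E$ from regions $Z^{i\pm}_{\rho}$ on the \emph{outer} side of each $\Pi_i$ (the side whose asymptotic boundary avoids $\partial E$, the other $P_j$'s and $\partial_\infty E$); that is already the content of $E\subset\overline{P(\Pi_1,\dots,\Pi_k)}$. The complement of $\cup_i C^i_\rho$ inside $P(\Pi_1,\dots,\Pi_k)$ lies on the \emph{inner} side of every $\Pi_i$, and there the sweep is obstructed: the region $Z^{i+}_\rho$ (with $L^{i+}_\rho$ chosen to meet $\overline{P(\Pi_1,\dots,\Pi_k)}$) contains $\partial E$, the remaining hyperplanes $P_j$, and large portions of $\partial_\infty E$, so you cannot bring a copy of ${\mathcal M}^r_d$ in from infinity without first crossing $\partial E$ or getting stuck on asymptotic points of $E$ at finite height. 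In particular your argument says nothing about a sequence $p_m=(x_m,t_m)\in E$ with $x_m$ bounded and $t_m\to\infty$ (vertical escape over the interior of $P(\Pi_1,\dots,\Pi_k)$), nor about $x_m\to\bar x\in\partial_\infty\Pi_1$ with $d(x_m,\Pi_1)\geq\rho$ on the inner side. The paper needs two extra ideas precisely here: for the horizontal case, a hyperplane $\Theta$ asymptotically tangent to $\Pi_1$ at $\bar x$, poking into $Z^{1+}_\rho$ while keeping $\partial E$ and all the $P_j$ on its other side, to which Theorem~\ref{iperplano} \emph{does} apply; for the vertical case, the compact, bounded-height piece $V_d(t)$ of ${\mathcal M}^r_d$ ($d>1$) lifted above $\partial E$ and then translated horizontally toward a point $q\in\partial_\infty\hyp^n$ lying outside all the balls bounded by the $\partial_\infty\Pi_i$ (this is where $n\geq3$ and admissibility enter), so that the sweep meets $E$ at an interior last contact point. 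Your stated worry about the lines $\partial_\infty P_i\cap\partial_\infty P_j$ is not where the difficulty lies.

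For part (ii) you propose a genuinely different mechanism from the paper's: you translate pieces of $E$ near $\partial_\infty E$ into a fixed ball, invoke Lemma~\ref{compactness-lemma} to extract a $C^{1,\lambda}$ limit squeezed (via part (i), applied for every $\rho$) into a vertical hyperplane, and conclude $N_{n+1}\to0$ by $C^1$ convergence. The paper instead uses Proposition~\ref{decay} to produce graphs of a uniform Euclidean size $\eta$ over the tangent planes at the translated points and derives a contradiction because a graph of definite size with definitely tilted tangent plane cannot fit inside the arbitrarily thin slab between the translated walls $\Delta_1,\Delta_2$ pinching onto $\Pi_1$. Your route is plausible and arguably cleaner where it applies, but note that it leans entirely on the full strength of (i) (containment in $C^i_\rho$ for \emph{arbitrarily small} $\rho$ outside a compact set), so the gap above propagates; you should also be explicit that the isometries you use (slice-wise hyperbolic translations and vertical translations) preserve the component $N_{n+1}=\langle N,\partial_t\rangle$, and that the limit object produced by Lemma~\ref{compactness-lemma} is only $C^{1,\lambda}$, so the identification of $M_\infty$ with a piece of a vertical hyperplane should come from the squeeze rather than from declaring it totally geodesic.
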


\begin{proof}

We start by proving (i). 
 
 Let us first observe the following general facts:
 \begin{itemize}
 \item   By Corollary \ref{halfspace-coro} one has
\begin{equation}
E\subset \overline{P(\Pi_1,\dots,\Pi_k)}.
\end{equation}

\item 
Consider $\Pi_i$ and $\Pi_j,$ $i\not=j.$ Notice that two cases can happen.
\begin{itemize}
\item If $\partial_{\infty}\Pi_i\cap \partial_{\infty}\Pi_j\not=\emptyset,$ then for  any $\rho,$  $C^i_{\rho}\cap C^j_{\rho}\not=\emptyset.$
\item If  $\partial_{\infty}\Pi_i\cap \partial_{\infty}\Pi_j=\emptyset,$ then there exists $\rho$ such that   $C^i_{\rho}\cap C^j_{\rho}=\emptyset.$

\end{itemize}

\item Let  $\Theta$ be  a hyperplane in $\hyp^n$ disjoint from $\Pi_i$  and such that  $\partial E$ and $\cup_{i=1}^kP_i$ belong to the same component of $(\hyp^n\times\R)\setminus(\Theta\times\R).$ Denote by 
$(\Theta\times\R)^-$ the component of $(\hyp^n\times\R)\setminus(\Theta\times\R)$ that does not contain $\cup_{i=1}^k P_i\cup\partial E.$    Theorem  \ref{iperplano} yields that  
$E\cap (\Theta\times\R)^-=\emptyset.$ 

\item For each $i$, we can choose the corresponding equidistant hypersurface $L^{i +}_\rho$ to be the one which intersects $\overline{P(\Pi_1,\dots,\Pi_k)}$.

\end{itemize}

  Assume,  by contradiction, that there exists a positive number  $\rho$  such that $E^K:=E\setminus E\cap(\cup_{i=1}^k C^i_{\rho})$ is a non compact set. This means that there is an unbounded sequence of points $p_m=(x_m,t_m)\in E^K.$ Since $\{p_m\}$ is unbounded, we have  two possible cases. Either  there exists an $i\in\{1,\dots,k\},$ say $i=1,$ such that $\{x_m\}$  has a subsequence converging  to a point $\bar{x}$ of 
$\partial_{\infty}\Pi_1\cap(\partial_{\infty}\hyp^n\times \R)$  or $\{x_m\}$ is bounded  and $\{t_m\}$ is unbounded. 

Let us first  deal with the case (a subsequence of) $x_m$  converges to $\bar{x}$. Since $E^K\subset E\backslash C^1_{\rho} $ we can assume that the corresponding    $\{p_m\}$ is contained in $Z^{1+}_{\rho}$. We can choose a hyperplane $\Theta$ as above in $\hyp^n$  such that  $\Theta\cap  Z^{1+}_{\rho}\not=\emptyset$  and  
$\partial_{\infty}\Theta\cap\partial_{\infty}\Pi_1=\bar x.$ This leads to a  contradiction with  the fact that  $E\cap (\Theta\times\R)^-=\emptyset$  and then we must have that $\{x_m\}$ is bounded.

Now, let $p_m=(x_m,t_m)$ be a sequence in $E^K$ such that $x_m$ is bounded and $t_m$ is unbounded. Without loss of generality we may assume that  $t_m\longrightarrow\infty.$  
In this case, we get a  contradiction using the hypersurfaces ${\mathcal M}^r_d,$ $d>1,$ described in Theorem \ref{classification-theorem}, constructed with respect to one of the vertical hyperplanes $P_i, $ say $P_1$.
We can choose the family ${\mathcal M}^{r}_d$ such that each hypersurface contains the equidistant hypersurface $L^{1+}_{\rho},$ with $\rho=\cosh^{-1}(d),$  and that is   contained in the closure of $Z^{1+}_{\rho}.$
Let $V_{d}$ be the closure of the connected component of $(\hyp^n\times\R)\setminus {\mathcal M}_d^{r},$ not containing $P_1.$ By the properties of ${\mathcal M}^r_d,$ the height of $V_{d}$ is bounded.  Moreover, as  
$t_m\longrightarrow\infty,$ there exists $t>0$ such that $\partial E\cap V_d(t)=\emptyset$ and $E\cap V_d(t)\not=\emptyset,$ where $V_d(t)$ is the vertical translation of $V_d$ of height $t.$ 
Denote by ${\mathcal M}^{r}_d(t)$ the vertical translation of ${\mathcal M}_d^{r}$ of height $t.$
Let $\gamma$ be a geodesic $\hyp^n\times \{0\}$, orthogonal to $P_1$ at a point $p$, whose endpoint is a point $q\in\partial_\infty\hyp^n\times \{0\}$ that is outside all closed balls limited by $\partial\Pi_i$, $i=1,\ldots,n$. Such a point $q$ exists since $n\geq 3.$   Now, let us consider the horizontal translations along $\gamma$ (extended slice-wise to $\h^n\times\R$) of ${\mathcal M}^r_d(t)$, in the direction of $q$.  Since $E$ is properly immersed and $\partial_\infty E\cap (\{q\}\times\R)=\emptyset$, we can proceed as in the proof of Theorem \ref{iperplano} and we find a horizontal translation along $\gamma$  of  ${\mathcal M}^r_d(t)$  that has a last contact point with an interior point of $E$.  This is a contradiction by the maximum principle. 
Hence (1) is proved.

 Now we prove (2). 

 Assume, by contradiction, that there exist $\varepsilon>0$ and  a sequence of points $p_m=(x_m,t_m)$ converging to a point in $\partial_{\infty} E$  such that $|N_{n+1}(p_m)|>\varepsilon$. Since $\{p_m\}$ is unbounded, we have  two possible cases. Either  there exists $i=1,\dots,k,$ say $i=1,$ such that $\{x_m\}$  has a subsequence converging  to a point of 
$\partial_{\infty}\Pi_1\cap(\partial_{\infty}\hyp^n\times \R)$  or $\{x_m\}$ is bounded  and $\{t_m\}$ is unbounded.

Let $p_0\in E$ be a fixed point. Since $E$ has finite strong total curvature,  Proposition \ref{decay} implies that there exist  $R_0>0$ and $s>0$ such that  

\begin{equation}
\sup_{x\in (E\setminus(E\cap B_{R_0}(p_0)))}|A|^2(x)<s.
\label{constant-s}
\end{equation}

   where $B_{R_0}(p_0)$ is an extrinsic ${\rm (n+1)}$-ball of radius $R_0$ in $\hyp^n\times \R$.   Notice that,  in the previous inequality, we can take the extrinsic ball, because $E$ is properly immersed.

Assume  first that (a subsequence of) $x_m$  converges to $\bar{x}\in\partial_{\infty}\Pi_1\cap(\partial_{\infty}\hyp^n\times \R).$  For the following constructions, see Figure 2.

\begin{figure}[!h]
\centerline{
\includegraphics[scale=0.4]{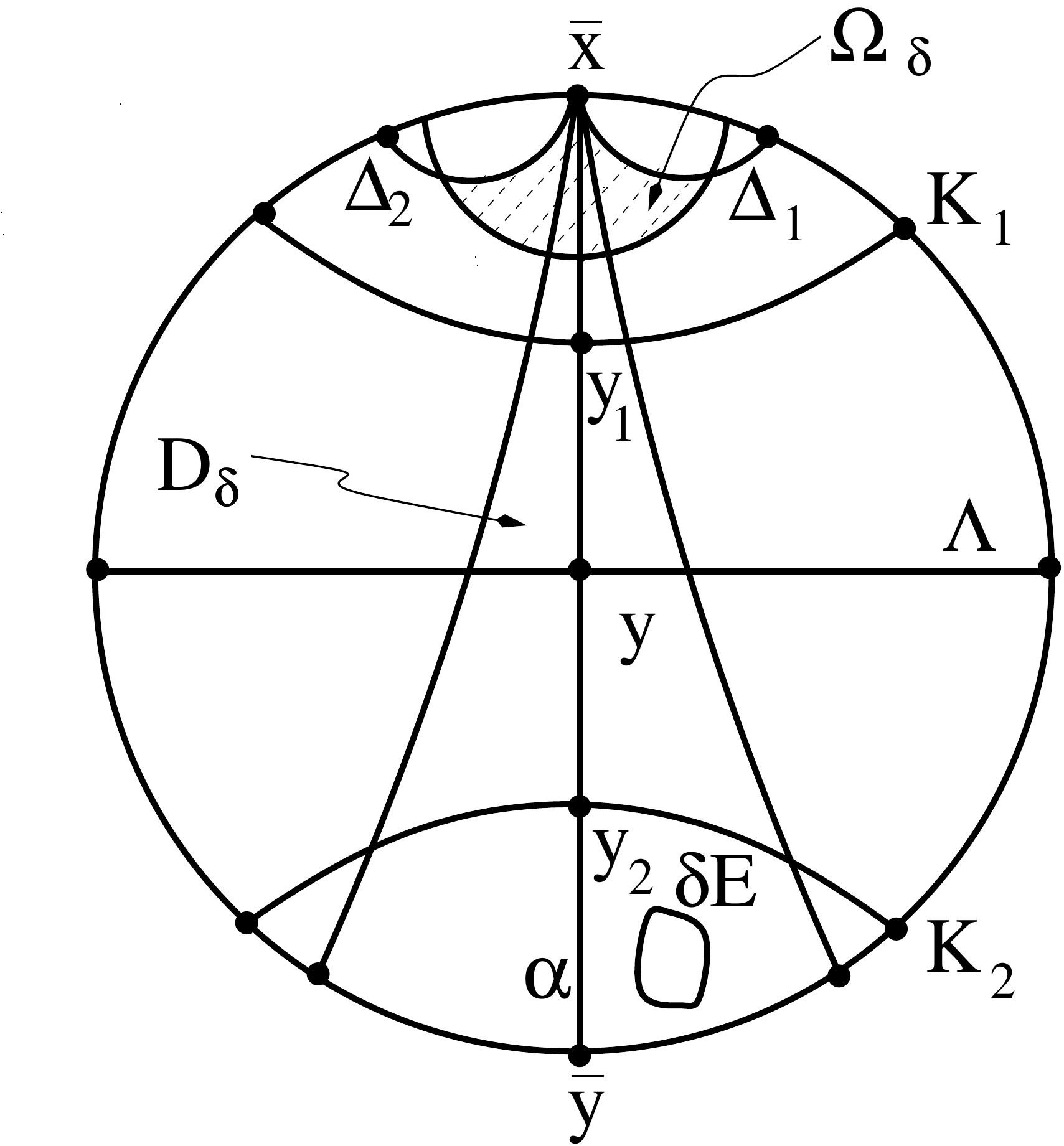}
\label{for-main-theo} 
}
\caption{Proof of Theorem 5.1}
\end{figure}

Without loss of generality, we can consider a geodesic $\alpha\subset \Pi_1$, passing through the origin,  such that 
$\bar x\in \partial_{\infty}\alpha$ and denote by $\bar y$ the point of $\partial_{\infty}\alpha $ distinct from $\bar x.$ 
We choose two points $y_1$ and $y_2$ on $\alpha$ such that $y_1$ is between $\bar x$ and $y_2.$ Let $y$ be the point on $\alpha$ equidistant from $y_1$ and $y_2.$ Finally, let  $\Lambda$  be the hyperplane in $\hyp^n$ through $y,$ orthogonal to $\alpha$ and  
 $K_i,$ $i=1,2$ be the  hyperplanes in 
$\hyp^n$ passing through $y_i,$ orthogonal to $\alpha.$ For $i=1,2$, denote by $(K_i\times\R)^+$ the connected component of 
$\hyp^n\times\R\setminus (K_i\times\R)$  whose asymptotic boundary contains $\bar y$ and by $(K_i\times\R)^-$ the other connected component. Since $\partial E$ is compact, it is possible to choose $y_1$ and $y_2$ such that $\partial E\subset (K_2\times\R)^+.$

Let ${\Delta}_1$ and ${\Delta}_2$  hyperplanes  in $\hyp^n$, symmetric  with respect to $\Pi_1, $ disjoints from $K_1$, such that, for $i=1,2$:
\begin{itemize}
\item $ \partial_{\infty }\Delta_i\cap\partial_{\infty }\Pi_1=\bar{x}.$
\item $\partial_{\infty}{\Delta}_i\cap\partial_{\infty}K_1=\emptyset$.
\item  $\partial E$ and $\cup_{i=1}^kP_i$ belong to the same component of $(\hyp^n\times\R)\setminus(({\Delta}_1\times\R)\cup({\Delta}_2\times\R)).$
\end{itemize}

 This yields that $({\Delta}_i\times\R)\subset (K_1\times\R)^-$, $i=1,2$, and by Corollary  \ref{halfspace-coro} we conclude that $E\cup\partial E$ is contained in the component of $\hyp^n\times\R\setminus (({\Delta}_1\times\R)\cup({\Delta}_2\times\R))$ containing $\Pi_1\times\R.$ Without loss of generality, we can choose ${\Delta}_i$ such that $({\Delta}_i\times\R)\cap E=\emptyset$.

\

For any $\lambda>0,$ we denote by $T_{\lambda}$ the hyperbolic translation of length $\lambda$ along $\alpha$ oriented from $\bar x$ to $\bar y.$ 
By abuse of notation, we also denote by $T_{\lambda}$ the extension of $T_{\lambda}$  to $\h^n\times\R$.  
For any $\lambda,$ denote by $U_{\lambda}$  the connected component of 
$\hyp^n\setminus (T_{\lambda}({\Delta}_1)\cup T_{\lambda}({\Delta}_2))$ containing  $\Pi_1.$ For any $\delta>0,$ there exists $\lambda(\delta)$ such that 
  the $(n-1)$-planes $T_{\lambda(\delta)}({\Delta}_i),$ $i=1,2,$  are contained in a neighborhood of $\alpha$ of diameter $\delta$  in the Euclidean metric in $\hyp^n$.
  
  \

 Let  $D_{\delta}$ be  the component of 
$\hyp^n\setminus(T_{\lambda(\delta)}({\Delta}_1)\cup T_{\lambda(\delta)}({\Delta}_2)\cup K_1\cup K_2)$ containing the point 
$y=\alpha\cap\Lambda.$ We notice that  $D_{\delta}\times \R=(U_{\delta}\times\R)\cap  (K_1\times\R)^+\cap (K_2\times\R)^-.$ Finally, denote by $\Omega_{\delta}$ 
the component of  $\hyp^n\setminus(T_{\lambda(\delta)}^{-1}(K_2)\cup {\Delta}_1\cup {\Delta}_2)$ such that $({\Omega}_{\delta}\times \R)\cap (\Pi_1\times\R)\not=\emptyset$   and 
$\partial_{\infty}\Omega_{\delta}=\bar x$. By construction, for any $\lambda>\lambda(\delta)$ and any $p\in\Omega_{\delta}\times\R,$ we have $T_{\lambda}(p)\in U_{\delta}\times\R.$ We notice that we can choose $\lambda(\delta)$ such that $ ({\Omega}_{\delta}\times \R) \subset (\hyp^n\times\R)\backslash B_{R_0}(p_0)$.

\

As $x_m\longrightarrow \bar x,$ we can assume that $p_m\in \Omega_{\delta}\times \R$ for   $m$ large. Moreover, for any $m$ large, there exists a unique 
$\lambda_m>0$ such that $T_{\lambda_m}(x_m)\in\Lambda,$ hence $q_m:=T_{\lambda_m}(p_m)\in \Lambda\times\R.$ 
For $m$ large enough, say $m>m_0,$ we have $\lambda_m>\lambda(\delta),$ which implies that $q_m\in(\Lambda\times\R)\cap (U_{\delta}\times\R)$.

For any $m>m_0,$ we denote by $E_m(\delta)$ the connected component of $T_{\lambda_m}(E)\cap (D_{\delta}\times\R)$ containing $q_m.$ By construction, 
$E_m(\delta)$ is the component of $T_{\lambda_m}(E\cap(\Omega_{\delta}\times\R))\cap (D_{\delta}\times\R)$ containing $q_m$ and for all $m>m_0$,  the boundary of 
$E_m(\delta)$ satisfies

 \begin{equation}
 \label{boundary-En-1}
 \begin{array}{c}
   \partial E_m(\delta)\subset \partial(D_\delta\times\R)\\
   \mbox{but}\\
  \partial E_m(\delta)\cap( (T_{\lambda(\delta)}({\Delta}_1)\times\R)\cup (T_{\lambda(\delta)}({\Delta}_2)\times\R))=\emptyset.
  \end{array}
 \end{equation}
 
%
%
Since $ ({\Omega}_{\delta}\times \R) \subset (\hyp^n\times\R)\backslash B_{R_0}(p_0)$, we can use (\ref{constant-s}) in order to conclude that  for all $\ p\in E_m(\delta)$, $m>m_0$ and $\delta>0$ it holds

\begin{equation}
\label{bounded-hyp-curv-1}
|A_m(p)|\leq s,
\end{equation}

where $A_m$ is the shape  operator  of  $E_m(\delta).$  As $D_{\delta}$ is compact, we can
look at $D_{\delta}\times\R$ as a subset of $\R^{n+1}$ where the metric   inherited from $\hyp^n\times\R$ and the Euclidean metric  are $C^1$ close. 
Then one can prove  that the norms of the second fundamental forms of $E_m(\delta)$ induced by the Euclidean and the hyperbolic metric are close (see Proposition 3.1 in the Appendix of \cite{ST}).

 As the norm of the second fundamental forms of $E_m(\delta)$ in the hyperbolic metric is  uniformly bounded (see  inequality \eqref{bounded-hyp-curv-1}), the same  holds for  the norm of second fundamental forms of the family $E_m(\delta)$ measured in the Euclidean metric.  By standard arguments, one can prove that this uniform bound implies the existence of a positive number $\eta$, independent on $m$ and $\delta$, such that a part $F_m$ of $E_m(\delta)$ is the Euclidean graph of a function $f_m$ defined on an  $n$-ball  of radius $\eta$ of the tangent hyperplane of $F_m$  at $q_m$. Moreover,  by applying vertical translations, we can assume the points $q_m$ are in a compact set of the Euclidean space and then  all the  functions $f_m$ have a uniform (Euclidean) $C^1$ bound (see, for instance,  the proof of  Lemma 2.2 in \cite{CM}).

Recall that we are assuming,  by contradiction, that $|N_{n+1}(p_m)|=|N_{n+1}(q_m)|>\varepsilon,$ for any $m.$ Then if we denote by 
$\nu$ the Euclidean unit normal vector, we have
 $\Vert \nu_{n+1}(q_m)\Vert >\varepsilon',$ for some positive $\varepsilon'$ (see the formula in the proof of Proposition 3.2 in \cite{ST3}). The last inequality   implies that the slope of the tangent planes of $E_m(\delta)$ at points $q_m$ is uniformly bounded from below. 
As the gradient  of the functions $f_m$  are uniformly bounded  and $\eta$ does  not depend on $\delta$ we can choose $\delta$ small enough such that the graph $F_m$ intersect $(T_{\lambda(\delta)}({\Delta}_1)\times\R)\cup (T_{\lambda(\delta)}({\Delta}_1)\times\R),$ that is in  contradiction with \eqref{boundary-En-1}. 
This finishes the proof in the case  where $\{x_m\}$  converges to a point of $\partial_{\infty}\hyp^n\times\R$.

\

In  the case where $t_m$ is unbounded   and $x_m$ is bounded the proof is somewhat easier. Without loss of generality, we can assume that $t_m\longrightarrow\infty.$ We proved before that for any $\rho$ there exists $t_{\rho}>0$ such that $E\cap \{|t|>t_{\rho}\}\subset \cup_{i=1}^kC^i_{\rho}.$  Then, there exists $i\in\{1,\ldots,n\}$, say $i=1$,  and a subsequence $t_{m_{1}}$, such that $t_{m_{1}}\in C^1_{\rho}$. Since we are assuming that $\{x_m\}$ is bounded, we may assume that $\{p_{m_{1}}\}\subset \omega\times\R$, where $\omega\subset \hyp^n\times\{0\}$ is a compact set.
Then, we proceed as in the former case, replacing 
$D_\delta$ by $\omega\cap C^1_{\rho}.$

\end{proof}






\begin{rem} 
\label{question}
Theorem \ref{strong-normal-minimal} can be viewed as a step towards a generalization of    the results of  \cite[Theorem 3.1 (c)]{HaRo} and \cite{HNST}  for minimal surfaces with finite total curvature in $\hyp^2\times\R.$ We point out that our technique is completely different  from the one in \cite{HaRo, HNST} where complex analysis is a key tool.

\end{rem}

%
%
%

\

{\em Acknowledgments.} The authors would like to thank the referee for the careful reading and the valuable suggestions.

\end{document}